\def\be#1{\begin{equation}\label{#1}}
\def\bas{\begin{align*}}
\def\eas{\end{align*}}
\def\bi{\begin{itemize}}
\def\ei{\end{itemize}}
\theoremstyle{plain}
   \newtheorem{theorem}[subsection]{Theorem}
   \newtheorem{proposition}[subsection]{Proposition}
   \newtheorem{lemma}[subsection]{Lemma}
\begin{document}

\author{Robert Fraser}
\address{Maxwell Institute of Mathematical Sciences and the School of Mathematics, University of
Edinburgh, JCMB, The King's Buildings, Peter Guthrie Tait Road, Edinburgh, EH9 3FD, Scotland}
\email{robert.fraser@@ed.ac.uk}

\author{James Wright}
\address{Maxwell Institute of Mathematical Sciences and the School of Mathematics, University of
Edinburgh, JCMB, The King's Buildings, Peter Guthrie Tait Road, Edinburgh, EH9 3FD, Scotland}
\email{J.R.Wright@@ed.ac.uk}




\title[On the local sum conjecture in two dimensions]{On the local sum conjecture in two dimensions}

\maketitle

\begin{abstract} In this paper we give an elementary proof of the local sum conjecture
in two dimensions. In a remarkable paper [CMN], this conjecture has been established
in all dimensions using sophisticated, powerful techniques from a research area
blending algebraic geometry with ideas from logic. The purpose of this paper is to give
an elementary proof of this conjecture which will be accessbile to a broad readership.
\end{abstract}

\section{Introduction}\label{introduction}

In their seminal paper \cite{DS}, Denef and Sperber formulated the following {\it local sum conjecture}.
Let $\phi \in {\mathbb Z}[X_1, \ldots, X_n]$ and consider the {\it local}  exponential sum
\[S_0 = S_0(\phi, p^s) \ := \  \frac{1}{p^{s n}} \sum_{\substack{x \in [\mathbb{Z}/p^s \mathbb{Z}]^n \\ x \equiv 0 \text{ (mod $p$) }}} e^{2 \pi i \phi(x)/p^s},\]
the local sum being a truncation of the complete exponential sum
\[S = S(\phi, p^s) \ := \  \frac{1}{p^{s n}} \sum_{x \in [\mathbb{Z}/p^s \mathbb{Z}]^n } e^{2 \pi i \phi(x)/p^s}\]
which selects the terms $x = (x_1, \ldots, x_n)$ where $ p \, |  x_j$ for all $1\le j \le n$.
The conjecture postulates that there exists a constant $C$, independent of $p$ and $s$, and a finite
set ${\mathcal P} = {\mathcal P}_{\phi}$ of primes such that for all $p \notin {\mathcal P}$,
\begin{equation}\label{DS-conjecture}
 |S_0| \ \le \ C s^{n-1} p^{-\sigma_c s} 
\end{equation}
where $\sigma_c = \sigma_c(\phi)$ is the complex oscillation index\footnote{In the literature, oscillation indices tend to be
defined as negative numbers. We will consider their absolute values and define them as positive numbers.} of $\phi$ at $0$ (See \cite{AGV}, section 13.1.5).
We will recall the precise definitions for this and other notions in the next section. The conjecture \eqref{DS-conjecture}  is related
to one of the Igusa conjectures on exponential sums which posits  similar uniform bounds for $S$ when $\phi$ is
any homogeneous polynomial.

In \cite{DS}, Denef and Sperber proved \eqref{DS-conjecture} when\footnote{
In \cite{DS} a couple of minor auxilary conditions were also imposed.} $\phi$ is ${\mathbb C}$-nondegenerate,
a notion introduced in \cite{Kushnirenko} and coined as {\it nondegenerate with respect to its Newton diagram} in \cite{DS}. In this case the complex oscillation
index $ \sigma_c(\phi) = 1/d(\phi)$ is the reciprocal of the {\it Newton distance}\footnote{
Strictly speaking, when the dimension is large, this is only true if $d(\phi) > 1$; see \cite{AGV}.} $d(\phi)$ of $\phi$, see
\cite{Varchenko}. In the same paper, Denef and Sperber also
establish the Igusa conjecture under the same hypothesis, when $\phi$ is ${\mathbb C}$-nondegenerate.

These conjectures are motivated in part by the striking, well-known similarities between bounds for $S_0$ (and $S$)
and bounds for euclidean oscillatory integrals; for a real-valued $\phi \in C^{\infty}({\mathbb R}^n)$
such that $\nabla \phi(0) = 0$, set
\[ I = I_{\psi}(\phi, \lambda) \ := \ \int_{{\mathbb R}^n} e^{2\pi i \lambda \phi(x)} \psi(x) \, dx \]
where $\lambda \in {\mathbb R}$ is a large real parameter and the amplitude $\psi \in C^{\infty}_c({\mathbb R}^n)$
is supported in a neighbourhood of $0$. We are interested in those exponents 
$\beta = \beta(\phi) \ge 0$ such that the bound
\begin{equation}\label{I-bound}
|I_{\psi}(\phi, \lambda)| \ \le \ C_{\phi} \, |\lambda|^{-\beta}
\end{equation}
holds for all $|\lambda| \ge 1$ and for all $\psi\in C^{\infty}_c({\mathbb R}^n)$ supported in some
neighbourhood of $0$. The (real) {\it oscillation index} $\sigma_r(\phi)$ is defined as the supremum of 
those $\beta$ such that \eqref{I-bound} holds. It is
known that $\sigma_c(\phi) \le \sigma_r(\phi)$ and examples show that strict inequality can occur (see \cite{AGV}, Lemma 13.6]).
One could formulate a stronger conjecture by replacing the exponent $\sigma_c(\phi)$ in \eqref{DS-conjecture}
with $\sigma_r(\phi)$.

In the literature, the various oscillation indices (for example, $\sigma_r(\phi)$ and $\sigma_c(\phi)$) and the
height function $h(\phi)$ introduced below are defined with respect to a critical point of $\phi$ and so we imposed
above the condition $\nabla\phi(0) = 0$.
When $\nabla \phi(0) \not= 0$, then \eqref{I-bound} holds for every $\beta > 0$ if $\psi$ has sufficiently
small support. Hence $I = I_{\psi}(\phi, \lambda)$ decays rapidly in $\lambda$. The corresponding case for the exponential
sum $S_0(\phi, p^s)$, when $\nabla \phi(0,0) \not= 0$, can be easily analysed. Say
$\partial_x \phi(0,0) = c \not= 0$ and by enlarging the set ${\mathcal P}$ to include the prime divisors
of $c$ if needed, we may suppose that  
$p \not| \, c$ for all $p \notin {\mathcal P}$. In this case, we have $S_0(\phi, p^s) = 0$ when $s\ge 2$ (this is
the analogue of rapid decay in the local field setting) and
when $s = 1$, we have $S_0(\phi, p) = p^{-2}$. In section \ref{basic-bound} we will give a proof 
of these simple facts. 

Henceforth we will always assume that our polynomial $\phi$ satisfies
$\nabla \phi(0,0) = 0$. 

These oscillation indices, $\sigma_c(\phi)$ and $\sigma_r(\phi)$, do not depend on the underlying coordinate system;
changing coordinates does not affect the decay rates in \eqref{I-bound} as we require the bound to hold for
all smooth $\psi$, supported in a sufficiently small neighbourhood of $0$. Unfortunately the Newton distance $d(\phi)$,
a quantity we can easily compute,
does depend on the coordinate system. Nevertheless, in two dimensions, we can get our hands on the
oscillation index $\sigma_r(\phi)$ since it is known that $\sigma_r(\phi) = 1/h(\phi)$ where
$h(\phi) := \sup_z d_z(\phi)$ is the so-called {\it height} of $\phi$. Here the supremum is taken over all
local coordinate systems $z= (x,y)$ of the origin (real-analytic coordinate systems if the
phase $\phi$ is real-analytic and smooth coordinate systems if $\phi$ is smooth) and $d_z(\phi)$ denotes
the Newton distance of $\phi$ in the coordinates $z$. 

In two dimemsions, the supremum $\sup_z d_z(\phi) = d_{z_0}(\phi)$ is attained in the definition of the height $h(\phi)$
(any such coordinate system $z_0$ is called {\it adapted})
and for any smooth, real-valued $\phi$ of finite-type,
\begin{equation}\label{I-bound-sharp}
 |I_{\psi}(\phi, \lambda)| \ \le \ C \, \log^{\nu}(|\lambda|) |\lambda|^{-1/h(\phi)}
\end{equation}
for large $\lambda$ and all $\psi$ supported in a sufficiently small neighbourhood of $0$. 
Here $\nu(\phi) \in \{0,1\}$ is the so-called {\it Varchenko's exponent} (also known as the
multiplicity of the oscillation index). Furthermore, 
\begin{equation}\label{I-bound-lower}
\lim_{\lambda \to \infty} \frac{\lambda^{1/h}}{\log^{\nu}(\lambda)} \,  I_{\psi}(\phi, \lambda) \ = \
c \, \psi(0) 
\end{equation}
where $c = c_{\phi}$ is nonzero.\footnote{The existence of this limit is proved under the additional
condition that the principal face of $\phi$ in adapted coordinates is compact.} In this generality,
the results in \eqref{I-bound-sharp} and \eqref{I-bound-lower} were established by Ikromov and M\"uller
in \cite{IM-adapted} and \cite{IM-uniform}. Their work was influential in our analysis establishing the following.

\begin{theorem}\label{main} Let $\phi \in {\mathbb Z}|X,Y]$ such that $\nabla\phi(0,0) = 0$. There there exists a finite set
${\mathcal P}$ of primes and a constant $C = C_{\phi}$ such that for any $p \notin {\mathcal P}$ and $s\ge 1$,
\begin{equation}\label{S_0-main-estimate}
|S_0(\phi, p^s)| \ \le \ C \ s^{\nu(\phi)} p^{-s/h(\phi)}
\end{equation}
holds for all $\phi$ except for an exceptional class ${\mathcal E}$. For $\phi \in {\mathcal E}$,
the estimate \eqref{S_0-main-estimate} holds with $\nu = 1$; that is $|S_0(\phi, p^s)| \le C s p^{-s/h(\phi)}$
holds for $\phi \in {\mathcal E}$.
\end{theorem}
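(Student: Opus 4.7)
The plan is to translate the Ikromov–Müller strategy for two-dimensional oscillatory integrals into the $p$-adic exponential sum setting. The first step is to pass to adapted coordinates. By Varchenko's theorem in two dimensions, there is a (formal) change of variables of the form $(x,y)\mapsto(x,y-\psi(x))$ that realises $d_{z_0}(\phi)=h(\phi)$; since $\phi$ is a polynomial with integer coefficients, one shows $\psi$ can be taken algebraic over $\mathbb{Q}$, and clearing denominators gives a polynomial change of coordinates defined over $\mathbb{Z}[1/N]$ for a fixed integer $N$. Putting the prime divisors of $N$ into $\mathcal{P}$ makes the substitution a $p$-adic unit change of variables, so $S_0$ is unchanged in modulus and we may assume from the outset $d(\phi)=h(\phi)$.

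Next, I would decompose the local sum according to the $p$-adic valuations $(a,b)=(v_p(x),v_p(y))\ge(1,1)$. Writing $x=p^a u$, $y=p^b v$ with $u,v$ units modulo appropriate powers of $p$, one obtains $S_0=\sum_{1\le a,b}S_{a,b}$, and the Newton polyhedron gives
\[
\phi(p^a u,p^b v)\ =\ p^{N(a,b)}\,F_{a,b}(u,v)\ +\ p^{N(a,b)+1}R_{a,b}(u,v),
\]
where $N(a,b)=\min_{(i,j)\in\mathrm{supp}(\phi)}(ai+bj)$ and $F_{a,b}$ is the face polynomial of the Newton diagram associated with the supporting direction $(a,b)$. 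This converts $S_{a,b}$ into a sum of Denef–Sperber type over the torus $(\mathbb{Z}/p^{s-\cdot}\mathbb{Z})^\times\times(\mathbb{Z}/p^{s-\cdot}\mathbb{Z})^\times$ with phase coming from $F_{a,b}$ plus a small perturbation.

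I would then split the sum over $(a,b)$ into three regimes. In the \emph{vertex regime}, where $(a,b)$ is proportional to no edge-normal of the Newton polyhedron, $F_{a,b}$ is a single monomial and $S_{a,b}$ factors as a product of complete Gauss-type sums in one variable, giving decay strictly better than $p^{-s/h}$; summed over $(a,b)$ these contributions are absorbed into the main bound. In the \emph{edge regime}, $(a,b)$ is a positive multiple of an edge-normal $(\alpha,\beta)$, and $F_{a,b}$ is a $\mathbb{C}$-nondegenerate edge polynomial by adaptedness; the Denef–Sperber theorem applied to $F_{a,b}$ yields a bound of the correct shape $p^{-s/h}$ with at most a logarithmic number of scales contributing. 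In the \emph{principal (bisectrix) regime}, where $N(a,b)=(a+b)d(\phi)/2$ and $(a,b)$ is close to $(s/(2h),s/(2h))$, the number of contributing scales is $O(s)$, and each scale contributes $O(p^{-s/h})$, which accounts for the factor $s^{\nu(\phi)}$.

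The main obstacle will be the principal regime when the principal face of $\phi$ (in adapted coordinates) is a vertex rather than a compact edge: this is precisely the situation that defines the exceptional class $\mathcal{E}$ and where Varchenko's exponent is forced to be $1$. In that case one cannot gain an extra decay from the edge-polynomial character sum and must content oneself with the weaker $s\cdot p^{-s/h}$ bound, while outside $\mathcal{E}$ a finer summation by parts along the bisectrix removes the extra $s$ factor when $\nu(\phi)=0$. Keeping $\mathcal{P}$ finite requires that $\mathbb{C}$-nondegeneracy of every edge polynomial $F_{a,b}$ survives reduction mod $p$; since nondegeneracy is the non-vanishing of finitely many polynomial expressions in the coefficients of $\phi$, this fails for only finitely many primes, which we append to $\mathcal{P}$.
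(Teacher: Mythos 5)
Your plan correctly identifies the overall structure — reduce to good coordinates by a shear, then estimate via the Denef--Sperber decomposition of $S_0$ over the faces of the Newton polyhedron — but there are two genuine gaps that prevent it from going through.

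\textbf{The change to adapted coordinates is generally not polynomial.} You assert that Varchenko's shear $\psi$ ``can be taken algebraic over $\mathbb{Q}$, and clearing denominators gives a polynomial change of coordinates,'' but this is false. The shear $y\mapsto y-\psi(x)$ that realises $d_{z_0}(\phi)=h(\phi)$ is in general an infinite (Puiseux) power series whose partial sums do not stabilise, so there is no finite polynomial you can extract. This is precisely the core technical obstacle of the two-dimensional problem: the paper's Theorem~\ref{CoV} does \emph{not} claim to reach adapted coordinates. It constructs a polynomial shear $\psi\in\mathbb{Q}[X]$ for which $\tilde\phi(x,y)=\phi(x,y+\psi(x))$ satisfies the weaker identity $h(\phi)=h(\tilde\phi)=h(\tilde\phi_{\rm pr})$, and then Theorem~\ref{initial-bound} is proven to be strong enough under this weaker hypothesis. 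Your proposal silently assumes $d(\tilde\phi)=h(\tilde\phi)$ after the shear, so the subsequent face-by-face estimates begin from a hypothesis that generally fails. Relatedly, your edge-regime analysis invokes the Denef--Sperber theorem for the face polynomials, but adaptedness does \emph{not} imply $\mathbb{C}$-nondegeneracy: e.g.\ $\phi(x,y)=x^2y^2(y-x)^2$ is adapted ($d(\phi)=3$, maximal root multiplicity $2$), yet its principal part has a double root off the axes and is degenerate. The paper avoids all appeals to nondegeneracy by using Proposition~\ref{vc} (a higher-order Hensel-lemma sublevel estimate) to bound each edge contribution in terms of the maximal root multiplicity $m_\tau$, which is then compared to $d_\tau$ via Lemma~\ref{simple-lemma} and Proposition~\ref{h-homo}.

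\textbf{The description of the exceptional class $\mathcal{E}$ is wrong.} You identify $\mathcal{E}$ with the case where the principal face in adapted coordinates is a vertex; but in that case $\nu(\phi)=1$ by definition, so the bound $s^{\nu(\phi)}p^{-s/h}$ already allows the factor $s$, and no exceptional class would be needed. In the paper, $\mathcal{E}$ is the class where the principal part is $a(bx^2+cxy+dy^2)^m$ with the quadratic irreducible over $\mathbb{Q}$. The mechanism is this: when $\pi(\phi)$ is a compact edge and $m_{\rm pr}(\phi)=d(\phi)$, the coordinates are adapted and $\nu(\phi)=0$, so the bound should be $p^{-s/h}$ with no $s$ factor. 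When $\phi\notin\mathcal{E}$, the principal part has a rational maximal-multiplicity root (Lemma~\ref{simple-lemma}(c)), so a further rational shear makes the principal face a vertex and the sharper vertex bound applies. When $\phi\in\mathcal{E}$ the maximal-multiplicity root is irrational and no such rational shear exists, forcing the weaker $s\,p^{-s/h}$ bound — this is exactly the residual loss the theorem statement allows. Your argument does not distinguish these cases and so cannot explain why the $s$-factor can be removed outside $\mathcal{E}$ but not inside it.
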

The class ${\mathcal E}$ consists of those polynomials of the form
\[\phi(x,y) \ = \  \ \ a(b y^2 + cxy + d x^2)^m \ + \ {\rm higher \ order \ terms}\footnote{We will describe this class
precisely in section \ref{definitions}.}\]
where the quadratic polynomial $b y^2 + c xy + d x^2$ is
irreducible over the rationals ${\mathbb Q}$. For example when $\phi(x,y) = a (x^2 + y^2)^m$,
we have $h(\phi) = m$ and $\nu(\phi) = 0$. However when $m\ge 2$ and $p \equiv 1$ mod $4$, then
$|S_0(\phi, p^s)| \sim  s p^{-s/m}$ for infinitely many $s\ge 1$. Furthermore when $p \equiv 3$ mod $4$,
then $|S_0(\phi, p^s)| \sim p^{-s/m}$ for infinitely many $s\ge 1$. These calculations are not difficult; see for example
\cite{W-igusa} where more general bounds are derived.



 As mentioned in the abstract, Cluckers, Mustata and Nguyen \cite{CMN} established the local sum conjecture
\eqref{DS-conjecture} in all dimensions and much more; they also establish the Igusa conjecture
for complete exponential sums $S(\phi, p^s)$ where $\phi$ is a general homogeneous polynomial. The
estimate \eqref{S_0-main-estimate} in Theorem \ref{main} is a slight strengthening in the two dimensional
case but more importantly, we establish \eqref{S_0-main-estimate} using elementary arguments, only 
basic $p$-adic analysis is used. A key step in our argument will follow ideas from Ikromov and M\"uller
in \cite{IM-adapted} in the euclidean setting which in turn were inspired from the arguments developed
in \cite{PSS} which gave an elementary treatment of Karpushkin's work \cite{Karpushkin} on stability bounds of euclidean oscillatory integral
estimates in two dimensions. 

The main effort in this paper is to rework euclidean arguments in the local field setting.
Basic euclidean arguments for estimating oscillatory integrals rely
heavily on the order structure of the reals (in applications of the mean value and intermediate value theorems
which are implicitly used in integration by parts arguments). We need to readdress these arguments, relying
more on rudimentary sublevel set estimates (bounds for the number of solutions to polynomial congruences)
in place of integration by parts arguments. These sublevel set bounds will be derived from a higher order
Hensel lemma and so matters are kept on an elementary level.

\subsection*{Notation} All constants $C, c,  c_0 > 0$ throughout this paper
will depend only on the  polynomial $\phi$, although
the values of these constants may change from line to line. Often it will be
convenient to suppress explicitly mentioning the constants $C$ or $c$ in these inequalities
and we will use the notation $A\lesssim B$ between positive quantities $A$ and $B$ to
denote the inequality $A\le C B$ (we will also denote this as $A = O(B)$). When we want to
emphasise the dependence of the implicit constant in $A\lesssim B$ on a parameter $k$,
we write $A\lesssim_k B$ to denote $A \le C_k B$. Finally we use the notation $A \sim B$ to
denote that both inequalities $A\lesssim B$ and $B\lesssim A$ hold.

\subsection*{Acknowledgements} We would like to thank Allan Greenleaf and Malabika Pramanik
for informative and enlightening discussions about oscillatory integrals.

This material is based upon work supported by the National Science Foundation under Award No. 1803086.
\section{Outline of the paper}\label{outline}
In the next section we will define precisely the various notions introduced above, including
reviewing the Newton polyhedron, diagram and distance of a polynomial. We will also give a quick
review of the required $p$-adic analysis that we will use and show how we can lift our exponential
sum $S_0$ over ${\mathbb Z}/p^s {\mathbb Z}$ to an oscillatory integral over the $p$-adic field. 
This will illustrate the close analogy between these kinds of exponential sums and euclidean oscillatory integrals.
In Sections \ref{basic-bound} and \ref{proof-basic} we will derive a basic bound for $S_0(\phi, p^s)$ which will imply \eqref{S_0-main-estimate}  
in Theorem \ref{main} when the coordinates $z = (x,y)$ of our given polynomial $\phi(x,y)$ are
{\it adapted}. 

This basic bound will employ a useful estimate for exponential sums in one variable which depends on
a generalisation of the classical Hensel lemma. We will outline the proof of this one dimensional bound in Section \ref{hensel}.

The main effort then will be to find a change of variables to put our polynomial $\phi$
into adapted coordinates. In general the change of variables that accomplishes this will be analytic.
Attempting to keep our analysis on an elementary level, we will find a polynomial
change of variables 
\[p(x,y) \ = \ (p_1(x,y), p_2(x,y)) \in  {\mathbb Q}[X,Y]\] so that the new phase ${\tilde \phi}(x,y) = \phi(p(x,y))$
will be a polynomial with rational coefficients. The polynomial ${\tilde \phi}(x,y)$ will not quite be in
adapted coordinates but nevertheless the bound established in Sections \ref{basic-bound} and \ref{proof-basic} will be sufficient to
prove Theorem \ref{main}.

To produce this change of variables, we will follow an algorithm due to Ikromov and M\"uller \cite{IM-adapted} 
in the euclidean setting. They, in turn, blend ideas from two different algorithms due to Varchenko \cite{Var-adapted} and
Phong, Stein and Sturm \cite{PSS}. This will be carried out in Section \ref{sect-CoV}. The algorithm producing
this change of variables with rational coefficients is based on the clustering of the roots of $\phi$ which
can be expressed in terms of Puiseux series.

\section{Definitions and preliminaries}\label{definitions}

A good reference for the  following basic results and definitions regarding oscillatory integrals
can be found in \cite{AGV}. 

\subsection*{The oscillation indices} Any polynomial $\phi \in {\mathbb Z}[Z_1, \ldots, X_n]$ can be viewed as a real-valued
phase and so the oscillation indices discussed in the introduction make sense for $\phi$. 
 The {\it complex oscillation index} is defined
as the supremum of $\beta$'s where the bounds $|\int_{\Gamma} e^{2 \pi i\lambda \phi(x)} dx| \le C_{\Gamma} \lambda^{-\beta}$
hold for large $\lambda > 1$ and all $n$-dimensional chains $\Gamma$ 
in a sufficiently small neighbourhood of $0$ in ${\mathbb C}^n$,
such that the imaginary part $\phi$ is strictly positive on the boundary of $\Gamma$. The 
complex oscillation index $\sigma_c(\phi)$ is smaller (and can be strictly smaller) than the
{\it oscillation index} $\sigma_r(\phi)$ of $\phi$ defined in the introduction. In general these
indices are difficult to compute. However when $\phi$ satisfies a certain nondegeneracy condition,
then we can get our hands on these numbers.

\subsection*{The Newton polyhedron and diagram}
To describe this nondegeneracy condition, we need to recall the definition of the Newton
polyhedron of a polynomial $\phi$; we will restrict ourselves to two dimensions although these
notions make sense in any dimension. Let ${\mathbb N} := \{0, 1, 2, \ldots \}$ include zero.
For any polynomial $\phi(x,y) =  \sum_{j,k} c_{j,k} x^j y^k$, we call the set 
${\mathcal S}(\phi) := \{(j,k)\in {\Bbb N}^2 \setminus \{0\} : c_{j,k} \not= 0 \}$, the {\it reduced} support of $\phi$. 
The {\it Newton polyhedron} $\Gamma(\phi)$ of $\phi$ is the convex hull of the
union of all quadrants $(j,k) + {\Bbb R}^2_{+}$ in ${\Bbb R}^2$
with $(j,k)\in {\mathcal S}(\phi)$. Let $\Delta(\phi)$ be the collection of compact faces (vertices and edges)
of $\Gamma(\phi)$. 
The {\it Newton diagram} ${\mathcal N}_d(\phi)$ is the union of the faces in $\Delta(\phi)$.

For each face $\gamma$ of $\Gamma(\phi)$, we set $\phi_{\gamma}(x,y) = \sum_{(j,k) \in \gamma} c_{j,k} x^j y^k$.
We say that $\phi$ is {\it ${\mathbb C}$-nondegenerate} ({\it ${\mathbb R}$-nondegenerate}) if for every
compact face $\tau \in \Delta(\phi)$,
$$
\nabla \phi_{\tau}(x,y) \ = \ (\frac{\partial \phi_{\tau}}{\partial x} (x,y),  \
\frac{\partial \phi_{\tau}}{\partial y} (x,y))
$$
never vanishes in $({\mathbb C}\setminus \{0\})^2$ ($({\mathbb R}\setminus \{0\})^2$).

\subsection*{The Newton distance and the height function}
If we use coordinates $(t_1,t_2)$
for points in the plane containing the Newton polyhedron, consider
the point $(d,d)$ in this plane where the bisectrix $t_1 = t_2$
intersects the boundary of $\Gamma(\phi)$. The coordinate
$d = d(\phi)$ is called the {\it Newton distance} of $\phi$ in the
coordinates $z = (x,y)$. The {\it principal face} $\pi(\phi)$ is the face
of minimal dimension (an edge or vertex) which contains the point $(d,d)$. 
Following \cite{IM-adapted}, we call $\phi_{\pi(\phi)}$ the {\it principal part} of $\phi$
and denote it by $\phi_{{\text pr}}$.

When $\phi$ is ${\mathbb R}$-nondegenerate, then the
oscillation index is the reciprocal of the Newton distance;\footnote{This is
true in two dimensions but we need to assume in addition that $d(\phi) > 1$ in higher dimensions.}  
$\sigma_r(\phi) = 1/d(\phi)$. In two dimensions,
we can still get our hands on the elusive oscillation index $\sigma_r(\phi)$ for general $\phi$ since 
$\sigma_r(\phi) = 1/h(\phi)$ is the reciprocal of the height $h(\phi) := \sup_z d_z$ where 
$d_z$ is the Newton distance of $\phi$ in the coordinates $z = (x,y)$.
Furthermore the supremum is attained $h(\phi) = d_{z_0}$ and we call any such coordinate system
$z_0$ {\it adapted}. This is no longer the case in higher dimensions. 

The notions of Newton polyhedron $\Gamma(\phi)$, Newton diagram ${\mathcal N}_d(\phi)$,
Newton distance $d(\phi)$ as well as principal face $\pi(\phi)$ and principal
part $\phi_{{\text pr}}$ easily extend from polynomials to any real-analytic function. This
will be useful in Section \ref{sect-CoV}.

\subsection*{The Varchenko exponent}
The {\it Varchenko exponent} $\nu(\phi)$ was introduced in \cite{Var-adapted} and is defined to be zero unless $h(\phi) \ge 2$ and in this case,
when the principal face $\pi(\phi^z)$ of $\phi^z$ in an adapted coordinate system $z$
is a vertex, we define $\nu(\phi)$ to be $1$. Otherwise we set $\nu(\phi) =0$.

\subsection*{The exceptional class ${\mathcal E}$} With the notions of the Newton diagram and the
principal part of $\phi$, we can now describe the exceptional class ${\mathcal E}$ precisely. It is the
class of polynomials $\phi$ whose principal part $\phi_{\rm pr}(x,y) = a (b x^2 + c xy + d y^2)^m$
where the quadratic polynomial $b x^2 + c xy + d y^2$ is irreducible over the rationals
${\mathbb Q}$. 

\subsection*{The $p$-adic number field}
We fix a prime $p$ and define the $p$-adic absolute value\footnote{We will also
use the notation $|z|$ for the usual absolute value on elements $z \in {\mathbb C}$ but the context
will make it clear which absolute value is being used.}  $|\cdot| = |\cdot|_p$ on the field
of rationals ${\mathbb Q}$ as follows.
For integers $a \in {\mathbb Z}$, we define $|a| := p^{-k}$ where $k\ge 0$ is the largest power
such that $p^k$ divides $a$. This $p$-adic absolute value extends to all rationals $a/b$ by $|a/b| = |a|/|b|$
and satisfies the basic conditions $|u v| = |u| |v|$ and $|u + v| \le |u| + |v|$ for all rationals $u,v \in {\mathbb Q}$,
giving ${\mathbb Q}$ a metric space structure $d(u,v) = |u - v|$.
The $p$-adic absolute value in fact satisfies a stronger version of triangle inequality called the ultrametric inequality: $|u+v| \le \max(|u|, |v|)$. This implies
$|u+v| = |u|$ if $|v| < |u|$ and so if $v \in B_r(u) := \{ w \in {\mathbb Q}: |w - u| \le r\}$, then $B_r(v) = B_r(u)$.

The $p$-adic field ${\mathbb Q}_p$ is the completion of the rational field ${\mathbb Q}$ with respect to the metric
defined by the $p$-adic absolute value. The elements in the completed field $x \in {\mathbb Q}_p$ can be 
represented by a Laurent series 
\begin{equation}\label{series-rep}
x \ = \ \sum_{j=-N}^{\infty} a_j p^j, \ \ \ \ a_j \in {\mathbb Z}/p{\mathbb Z} \ = \ \{0,1,\cdots, p-1\},
\end{equation}
convergent with respect to $|\cdot| = |\cdot|_p$ which extends uniquely to all of ${\mathbb Q}_p$
by $|x| = p^N$ where $a_{-N} \not= 0$ is the first term of the series representation \eqref{series-rep}.
We also define $|0| = 0$.

The compact unit ball $B_1(0) = \{ x \in {\mathbb Q}_p : |x| \le 1 \}$ plays a special role as it is a
ring due to the ultrametric inequality. We call this compact ring the {\it ring of $p$-adic integers}
and denote it by ${\mathbb Z}_p$. Hence ${\mathbb Q}_p$ is a locally compact abelian group and
has a unique Haar measure $\mu$ which we normalise so that $\mu({\mathbb Z}_p) = 1$. To carry out
Fourier analysis on ${\mathbb Q}_p$, we fix a non-principal additive character ${\rm e}$
defined by \[{\rm e}(x) \ := \  e^{2\pi i [\sum_{j=-N}^{-1} a_j p^j] } \ \ {\rm where} \ x \ {\rm
is \ represented \ as \ in \ \eqref{series-rep}}.\] 
All other characters $\chi$ on ${\mathbb Q}_p$ are given by $\chi(x) = {\rm e}(v x)$ for some $v\in {\mathbb Q}_p$.
Hence the Fourier dual of ${\mathbb Q}_p$ is itself. 

\subsection*{Hensel's lemma} The following basic lemma harks back to the origins of $p$-adic analysis
and it, together with a generalisation described in Section \ref{hensel}, will be useful for us.

\begin{lemma}\label{hensel-lemma} Let $g \in {\mathbb Z}[X]$ such that $g(x_0) \equiv 0$ mod $p^s$ for some
integer $x_0$. If $p^{\delta} || g'(x_0)$ (or $|g'(x_0)| = p^{-\delta}$) 
for some $\delta < s/2$, then there exists a unique $x \in {\mathbb Z}_p$
such that $g(x) = 0$ and $x \equiv x_0$ mod $p^{s-\delta}$. 
\end{lemma}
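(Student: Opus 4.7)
The natural approach is Newton's method adapted to the $p$-adic setting. I would define a sequence $x_0, x_1, x_2, \ldots$ in $\mathbb{Z}_p$ by the recursion
\[
x_{n+1} \ := \ x_n - \frac{g(x_n)}{g'(x_n)},
\]
and track the two quantities $v_n := v_p(g(x_n))$ (the $p$-adic valuation of $g(x_n)$) and $|g'(x_n)|$. The initial data are $v_0 \ge s$ and $|g'(x_0)| = p^{-\delta}$. The plan is to show by induction that $|g'(x_n)| = p^{-\delta}$ for all $n$ while $v_n - 2\delta \ge 2^n(s - 2\delta)$, so that $v_n \to \infty$.

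The key input is the polynomial Taylor expansion with integer coefficients. Writing
\[
g(X + h) \ = \ g(X) + g'(X)\, h + h^2\, R(X, h), \qquad g'(X+h) \ = \ g'(X) + h\, T(X,h),
\]
for some $R, T \in \mathbb{Z}[X, h]$, and setting $h_n := -g(x_n)/g'(x_n)$, the ultrametric inequality gives $|h_n| = p^{-(v_n - \delta)}$ and, since $x_n, h_n \in \mathbb{Z}_p$ and $R, T$ have integer coefficients, $|R(x_n, h_n)|, |T(x_n, h_n)| \le 1$. Hence
\[
|g(x_{n+1})| \ \le \ |h_n|^2 \ = \ p^{-2(v_n - \delta)} \qquad \text{and} \qquad |g'(x_{n+1}) - g'(x_n)| \ \le \ |h_n|.
\]
The hypothesis $\delta < s/2$ enters precisely here: it forces $s - \delta > \delta$, so $|h_0| \le p^{-(s-\delta)} < p^{-\delta} = |g'(x_0)|$, and by induction $|h_n| < |g'(x_n)|$ at every step. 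The ultrametric property then gives $|g'(x_{n+1})| = |g'(x_n)| = p^{-\delta}$, closing the induction and yielding the doubly-exponential decay $v_{n+1} \ge 2(v_n - \delta)$, equivalently $v_n - 2\delta \ge 2^n(s - 2\delta) > 0$.

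Convergence and the congruence are then immediate: $|x_{n+1} - x_n| = |h_n| \le p^{-(s-\delta)}$ for all $n$, so $\{x_n\}$ is Cauchy in $\mathbb{Z}_p$ with limit $x$ satisfying $x \equiv x_0 \pmod{p^{s-\delta}}$, and continuity of $g$ together with $v_n \to \infty$ forces $g(x) = 0$. For uniqueness, suppose $y \in \mathbb{Z}_p$ is another root with $y \equiv x_0 \pmod{p^{s - \delta}}$. Since $|y - x_0|, |x - x_0| \le p^{-(s-\delta)} < p^{-\delta}$, the same ultrametric argument gives $|g'(x)| = p^{-\delta}$; Taylor expanding $0 = g(y) - g(x) = (y-x)g'(x) + (y-x)^2 R(x, y-x)$ and dividing by $g'(x)$ would yield $|y - x| \le p^{-2(s-\delta) + \delta} < |y-x|$ unless $y = x$.

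The only subtle point is maintaining the invariant $|g'(x_n)| = p^{-\delta}$ throughout the iteration; this is exactly where the quantitative hypothesis $\delta < s/2$ is used, and it is the reason a naive version of Hensel's lemma (which assumes $|g'(x_0)| = 1$) must be upgraded in this way. Everything else is routine bookkeeping once the Taylor expansion with $\mathbb{Z}$-coefficients and the ultrametric inequality are in place.
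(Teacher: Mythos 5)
The paper does not actually prove Lemma~\ref{hensel-lemma}; it cites \cite{Robert}, Chapter~1.6 for the proof, and later (Remark~4 after Lemma~\ref{hensel-generalised}) describes ``the usual proof of Hensel's lemma using the Newton formula to produce an approximating sequence'' as the template. Your proof is exactly that standard Newton-iteration argument, correctly carried out: the induction on $v_n$ and the invariant $|g'(x_n)| = p^{-\delta}$, both hinging on $s - \delta > \delta$, are the right bookkeeping, and the $\mathbb{Z}$-coefficient Taylor expansion plus the ultrametric inequality do all the work. The only slight imprecision is the final uniqueness line: from $|y-x|\le |y-x|^2 p^\delta$ the cleaner step is to divide by $|y-x|$ (assuming $y\neq x$) to get $|y-x|\ge p^{-\delta}$, contradicting $|y-x|\le p^{-(s-\delta)}<p^{-\delta}$; the inequality ``$p^{-2(s-\delta)+\delta}<|y-x|$'' you write does not follow directly and should be replaced by that one-line division argument (or by iterating the contraction). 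This is cosmetic, not a gap.
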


For a proof of Hensel's lemma, see \cite{Robert}, Chapter 1.6.

\subsection*{The sum ${S}_0(\phi, p^s)$ as an oscillatory integral}\label{sum-osc}
It is natural to analyse ${S}_0(\phi, p^s)$ by lifting this sum to an oscillatory integral defined
over the $p$-adic field ${\mathbb Q}_p$.

First let us see how the complete exponential sum $S(\phi, p^s)$ can be written
as the following oscillatory integral; we claim that
\begin{equation}\label{p-osc-int}
{S}(\phi, p^s) \ = \  \ \int\!\!\!\int_{{\mathbb Z}_p \times {\mathbb Z}_p} \, {\rm e}(p^{-s} \phi(x,y)) \, d\mu(x)
d\mu(y)
\end{equation}
holds. 
Consider a pair $x_0, y_0$ of integers and note that for any
$x \in B_{p^{-s}}(x_0)$
and $y \in B_{p^{-s}}(y_0)$,
we have ${\rm e}(p^{-s} \phi(x,y)) = {\rm e}(p^{-s} \phi(x_0, y_0))$. This simply follows from the definition of 
the character ${\rm e}$. Hence the oscillatory integral in \eqref{p-osc-int} can be written as
\[
\sum_{(x_0, y_0) \in [{\mathbb Z}/p^s{\mathbb Z}]^2} 
 \int\!\!\!\int_{B_{p^{-s}}(x_0)\times B_{p^{-s}}(y_0)}  \, {\rm e}(p^{-s} \phi(x,y)) \, d\mu(x)
d\mu(y)
\]
\[
= \ \sum_{(x_0, y_0) \in [{\mathbb Z}/p^s{\mathbb Z}]^2}  {\rm e}(p^{-s} \phi(x_0,y_0)) \mu(B_{p^{-s}})^2 \ = \
p^{-2s} \sum_{(x_0, y_0) \in [{\mathbb Z}/p^s{\mathbb Z}]^2}  e^{2\pi i\phi(x_0,y_0)/p^s}
\] 
and this last sum is our complete exponential sum $S(\phi, p^s)$. The last equality follows
since ${\rm e}(p^{-s} \phi(x_0,y_0)) =  e^{2\pi i\phi(x_0,y_0)/p^s}$ by the definition of ${\rm e}$.

A similar argument shows that our local sum $S_0(\phi, p^s)$ has the following
oscillatory integral representation:
\begin{equation}\label{S_0-osc-rep}
S_0(\phi, p^s) \ = \ 
\iint_{|x|, |y| \leq p^{-1}} {\rm e} (p^{-s} \phi(x,y)) \, d\mu(x)  d\mu(y).
\end{equation}
To simplify notation, we will denote the Haar measure $d\mu(x)$ by $dx$ and $\mu(E)$ by $|E|$.

\section{A reduction of Theorem \ref{main} to a basic bound for $S_0(\phi, p^s)$}\label{basic-bound}

In this section we will give a basic bound on the oscillatory integral in \eqref{S_0-osc-rep} which represents
our local sum $S_0(\phi, p^s)$. This bound by itself will fall short in proving the desired bound
\eqref{S_0-main-estimate} in Theorem \ref{main} and so one of our main tasks will be to
find a change of variables in \eqref{S_0-osc-rep} so that the bound formulated
in this section, with the transformed phase under this change of variables, 
is sufficient to establish Theorem \ref{main}.   

First though, we establish the simple facts about the exponential sum $S_0(\phi, p^s)$ when $\nabla\phi(0,0) \not= 0$
mentioned in the Introduction, allowing us to reduce matters to the case when $\nabla\phi(0,0) = 0$. 
Without loss of generality suppose $\phi(0,0) = 0$ and $\partial_x \phi(0,0) = c \not= 0$ and $p \not| \, c$ whenever $p \notin {\mathcal P}$.
Then for any integer $y \equiv 0$ mod $p$, consider the polynomial $g \in {\mathbb Z}[X]$ defined by $g(x) = \phi(x,y)$
and note that $p \not| \, g'(x)$ for every $x \equiv 0$ mod $p$. Hence by Hensel's lemma, the map $x \to g(x)$
defines a bijection on $\{ x \in {\mathbb Z}/p^s{\mathbb Z} : x \equiv 0 \, {\rm mod} \ p \}$ so that
\[\sum_{\substack{ x \in \mathbb{Z}/p^s \mathbb{Z}  \\x \equiv 0 \text{ (mod $p$)}}} e^{2 \pi i g(x)/p^s}
\ = \ \sum_{\substack{ u = 0  \\ p \, | \, u }}^{p^s - 1} e^{2 \pi i u/p^s}\]
which is equal to zero when $s\ge 2$, and equal to 1 when $s=1$. Hence
\[S_0(\phi, p^s) \ = \   \frac{1}{p^{2 s}} \sum_{\substack{(x,y) \in [\mathbb{Z}/p^s \mathbb{Z}]^2 \\ x,y \equiv 0 \text{ (mod $p$) }}} e^{2 \pi i \phi(x,y)/p^s} \ = \ 0\]
when $s\ge 2$ and equal to $p^{-2}$ when $s=1$.

A key result in this paper is the following. 

\begin{theorem}\label{CoV} For any $\phi \in {\mathbb Z}[X,Y]$ with $\nabla \phi(0,0) \neq 0$, we can find a polynomial
$\psi \in {\mathbb Q}[X]$ such that if ${\tilde \phi}(x, y) = \phi(x, y + \psi(x))$, then
$h(\phi) = h({\tilde \phi}) =  h({\tilde \phi}_{\rm pr})$.
\end{theorem}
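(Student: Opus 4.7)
The plan is to implement the Ikromov--Müller iterative procedure from \cite{IM-adapted}, modified to produce a polynomial shift with rational coefficients rather than the (generally algebraic, fractional-exponent) Puiseux series shift used in the real-analytic setting. The starting point is a Puiseux factorization
\[
\phi(x, y) \;=\; U(x, y) \prod_i \bigl(y - r_i(x)\bigr)
\]
valid in a neighbourhood of the origin over an algebraic closure, where $U(0,0) \neq 0$ and each $r_i$ is a Puiseux series in $x$. These roots cluster according to their leading fractional exponents, and the clusters are in bijective correspondence with the compact edges of $\Gamma(\phi)$. I would then iterate the step below, mirroring the blend of the Varchenko and Phong--Stein--Sturm algorithms in \cite{IM-adapted}.

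First, check whether the given coordinates are adapted. By Varchenko's criterion, $(x, y)$ is adapted precisely when no root $y = c x^a$ of $\phi_{\rm pr}$ has multiplicity exceeding $d(\phi)$; in that case I set $\psi \equiv 0$ and verify directly that for a polynomial in adapted coordinates the principal part also realises the height, giving $h(\phi_{\rm pr}) = d(\phi) = h(\phi)$. Otherwise, I identify the dominant root cluster and its Puiseux expansion $r(x) = c_1 x^{a_1} + c_2 x^{a_2} + \cdots$; the shift $y \mapsto y + \sum_j c_j x^{a_j}$ truncated at the appropriate level drives the principal face of the shifted polynomial towards the bisectrix.

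To produce a polynomial $\psi \in \mathbb{Q}[X]$, I would truncate the Puiseux series keeping only terms of positive integer exponent and symmetrise over Galois conjugacy within the dominant cluster. Since $\phi \in \mathbb{Z}[X, Y]$, each cluster is Galois-stable, so replacing the shift by an invariant combination of its Galois orbit yields rational coefficients; the integer-exponent truncation ensures that $\psi$ is a polynomial rather than a fractional-power expression. The iteration terminates in finitely many steps because each round strictly enlarges the Newton distance of the principal part until it meets the (coordinate-invariant) height. Since $h$ is coordinate-invariant, $h(\tilde\phi) = h(\phi)$ for free, and the termination criterion delivers the remaining equality $h(\tilde\phi_{\rm pr}) = h(\phi)$.

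The main obstacle is reconciling the polynomial-with-rational-coefficients constraint with the general need for fractional and algebraic-coefficient shifts. When the obstructing cluster factors over $\mathbb{Q}$, integer-exponent truncation suffices; when it does not and the required fractional exponent cannot be absorbed into a rational polynomial, the algorithm stalls precisely on polynomials whose principal part has the form $a(by^2 + cxy + dx^2)^m$ with the quadratic factor irreducible over $\mathbb{Q}$, that is, on the exceptional class $\mathcal{E}$. Even in this obstructed case, I expect the partial shift still to achieve the weaker identity $h(\tilde\phi_{\rm pr}) = h(\tilde\phi) = h(\phi)$ asserted by the theorem, while the stronger identity $d(\tilde\phi) = h(\tilde\phi)$ (full adaptedness) fails over $\mathbb{Q}$ in general. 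Verifying that the partial shift nevertheless realises $h(\tilde\phi_{\rm pr}) = h(\phi)$, especially on the boundary of the exceptional locus, is where I expect the bulk of the technical work to lie.
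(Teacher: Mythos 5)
Your overall strategy — iterate the Ikromov--Müller shifts guided by Puiseux clustering — is the paper's strategy. But there are three substantive points where your proposal diverges from what actually works, and the middle one is a genuine misconception about the structure of the problem.

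First, the mechanism by which you obtain rational polynomial shifts is wrong. You propose to truncate the Puiseux expansion to integer-exponent terms and then symmetrise over Galois conjugates within the dominant cluster. Neither device is needed, and the symmetrisation would in fact destroy the argument: the shift must target the \emph{specific} root of maximal multiplicity in order to move the principal face, and averaging it over a Galois orbit spreads out the effect. The correct observation, and the crux of the paper's proof, is Lemma~\ref{QuasiHomogeneousHeight} (Corollary~2.3 of \cite{IM-adapted}, which is valid over any field, as observed in \cite{W-igusa}): when the coordinates are \emph{not} adapted, Theorem~\ref{AdaptedCharacterization} forces $\kappa_2/\kappa_1 \in \mathbb{N}$ (so the exponent is automatically an integer) and forces the root of multiplicity exceeding $d(f)$ to be the \emph{unique} such root, hence Galois-stable, hence already rational. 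Rationality and integrality are consequences of the non-adaptedness hypothesis, not something you impose by truncation and averaging.

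Second, you identify the exceptional class $\mathcal{E}$ as the locus where the algorithm ``stalls'' and full adaptedness over $\mathbb{Q}$ fails. This misreads the role of $\mathcal{E}$. For $\phi_{\rm pr}(x,y) = a(bx^2 + cxy + dy^2)^m$ with the quadratic irreducible, every root of the principal part has multiplicity $m = d(\phi) = m_{\rm pr}(\phi)$, so condition~(c) of Theorem~\ref{AdaptedCharacterization} fails and the coordinates are \emph{already adapted}. Theorem~\ref{CoV} is trivially satisfied with $\psi \equiv 0$, and the algorithm never starts. The class $\mathcal{E}$ plays no role whatsoever in the proof of Theorem~\ref{CoV}; it appears in Theorem~\ref{main} because of a number-theoretic phenomenon (the splitting behaviour of $p$ in the quadratic field) affecting the exponent of $s$ in the bound, not because of any obstruction to changing variables.

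Third, your termination claim (``each round strictly enlarges the Newton distance of the principal part until it meets the height'') is not adequate. The paper explicitly treats the possibility that the iteration does \emph{not} terminate in finitely many steps: in that regime the relevant multiplicities $N_k$ form a nonincreasing sequence of positive integers and hence stabilise, and one then constructs a single polynomial $\psi_0 \in \mathbb{Q}[X]$ from the stabilised data so that $\tilde\phi$, while still not in adapted coordinates, satisfies $h(\tilde\phi_{\rm pr}) = h(\phi)$. This residual gap between ``adapted'' and ``height of the principal part equals the height'' is exactly why the theorem asserts only the latter, and you would need an argument for this non-terminating case rather than a blanket strict-increase claim.
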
 

This result was established in the euclidean setting by Ikromov and M\"uller, \cite{IM-adapted}.
We follow their argument closely but with an extra effort to ensure that the polynomial
$\psi$ we end up with has rational coefficients. We postpone the proof until Section \ref{sect-CoV}.

This change of variables $(x,y) \to (x, y + \psi(x))$ depends only on $\phi$. We will include in our
exceptional set ${\mathcal P}$ those primes which arise as prime divisors of the coefficients of 
the transformed phase ${\tilde \phi}(x, y) = \phi(x,y + \psi(x)) \in {\mathbb Q}[X,Y]$. Hence
${\tilde \phi}$ will not only be a polynomial
with rational coefficients, these coefficients will be units (their $p$-adic absolute values are equal to 1)
in the ring of $p$-adic integers ${\mathbb Z}_p$. Hence implementing this change of variables in
\eqref{S_0-osc-rep} shows that $S_0(\phi, p^s) = S_0({\tilde \phi}, p^s)$.

Therefore in order to prove Theorem \ref{main}, according to Theorem \ref{CoV}, 
 it suffices to assume that
the phase $\phi$ in the oscillatory integral \eqref{S_0-osc-rep} representing the local sum $S_0$
is a polynomial with rational coefficients lying in ${\mathbb Z}_p$ and $h(\phi) = h(\phi_{\rm pr})$. 

We  begin with the decomposition 
\begin{equation}\label{S_0-decomp}
S_0(\phi,p^s) \ = \ \sum_{\tau \in \Delta(\phi)} \sum_{\substack{\vec l \in \mathbb{N}^2 \\ F(\vec l) = \tau}} 
\iint_{|x|=p^{-l_1}, |y| = p^{-l_2}} {\rm e}(p^{-s } \phi(x,y) \, dx dy
\end{equation}
introduced in \cite{DS}. Here, for each ${\vec l} = (l_1, l_2) \in {\mathbb N}^2$, 
$F(\vec l)$ is the face of $\Gamma(\phi)$ of largest dimension which is contained in the 
supporting line of $\Gamma(\phi)$ perpendicular to ${\vec l}$. In other words,
\[F(\vec l) \ = \ \{ {\vec t}\in \Gamma(\phi): \, {\vec t} \cdot {\vec l} = N(\vec l) \} \ \ {\rm where} \ \ 
N(\vec l) \ := \ \min_{{\vec t} \in \Gamma(\phi)}  {\vec t}\cdot {\vec l}.\]
Note that $F(\vec l)$ is a compact face of $\Gamma(\phi)$ if and only if
$l_1 l_2 \not= 0$ which explains why only compact faces $\Delta(\phi)$ enter into the decomposition of $S_0$ above.

For each compact $\tau \in \Delta(\phi)$ and each $\vec l$ such that $\tau = F(\vec l)$,
write $\phi(x,y) = \phi_{\tau}(x,y) + h_{\tau}(x,y)$. Then
\[ \phi_{\tau}(p^{l_1}x, p^{l_2} y) \ = \ \sum_{(j,k)\in \tau}  p^{(j,k)\cdot {\vec l}} \, c_{j,k}  x^j y^k \ = \
p^{N(\vec l)} \phi_{\tau}(x,y) \]
and 
\[ h_{\tau}(p^{l_1}x, p^{l_2} y) \ = \ \sum_{(j,k)\in \Gamma(\phi)\setminus{\tau}}  p^{(j,k)\cdot {\vec l}} \,
c_{j,k} x^j y^k \ = \
p^{N(\vec l)} \,  p \ g_{\tau}(x,y) \]
for some polynomial $g \in {\mathbb Z}_p[X,Y]$. 

Changing variables to normalise the region of integration, we have
\[S_0(\phi,p^s) = \sum_{\tau \in \Delta(\phi)} \sum_{\substack{\vec l \in \mathbb{N}^2 \\ F(\vec l) = \tau}} p^{-l_1 - l_2} 
\iint_{|x|, |y| = 1} {\rm e}(p^{-s + N(\vec l)} (\phi_{\tau} (x,y) + p g_{\tau}(x,y))) \, dx dy .\]
Now let us fix a compact face $\tau \in \Delta(\phi)$. If $\tau = \{(\alpha, \beta)\}$ is a vertex, then
$\phi_{\tau}(x,y) = c x^{\alpha} y^{\beta}$ is a monomial where $c$ is a rational number with $|c| = 1$.
If $\tau$ is a compact edge, then 
\[ \tau \ \subset \ \{(t_1, t_2) :  q t_1 + m t_2 = n \}\]
for some integers $(m,q,n) = (m_{\tau}, q_{\tau}, n_{\tau})$ with ${\rm gcd}(m,q) = 1$
and $\phi_{\tau}$ is a quasi-homogeneous polynomial, $\phi_{\tau} (r^{\kappa_1} x, r^{\kappa_2} y) = r \phi_{\tau}(x,y)$
for $r>0$ where $\kappa_1 = q/n$ and $\kappa_2 = m/n$. The polynomial $\phi_{\tau}$ consists of at least two terms and so
by homogeneity, we can factor\footnote{See \cite{IM-adapted} for details.}
\begin{equation}\label{phi-factored}
\phi_{\tau}(x,y) \ = \ c \, x^{\alpha} y^{\beta} \prod_{j=1}^N (y^q - \xi_j x^m)^{n_j}
\end{equation}
for some roots $\{\xi_j\}_{j=1}^N$ lying in $\mathbb{Q}^{\text{alg}}$. It will be convenient for us to think
of the roots $\{\xi_j\}$ as lying in some finite field extension of ${\mathbb Q}_p$. Again $c$ is a rational with $|c| = 1$.

Assume, without loss of generality, that $1 \leq q \leq m$. The exponent $d_{\tau} = d_{\tau}(\phi)$, where
\begin{equation}\label{homo-degree}
d_{\tau} \ := \ \frac{1}{\kappa_1 + \kappa_2} \ = \ \frac{n}{m+q} \ = \ \frac{q \alpha + m \beta + q m M}{m+q}
\end{equation}
and $M := \sum_{j=1}^N n_j$, is called the {\it homogeneous distance}\footnote{We are borrowing terminology
from \cite{IM-adapted}.} of $\phi_{\tau}$. The point $(d_{\tau}, d_{\tau})$ on the bisectrix lies on the line
$\{(t_1, t_2) : q t_1 + m t_2 = n\}$ containing $\tau$. Hence $d_{\tau}(\phi) \le d(\phi)$ for every compact
edge $\tau \in \Delta(\phi)$. If $\tau$ is the principal face, then $d_{\tau}(\phi) = d(\phi)$
is the Newton distance of $\phi$. 

The following simple lemma will be useful.

\begin{lemma}\label{simple-lemma} Let $\tau \in \Delta(\phi)$ be a compact edge. 

(a) If $\tau$ is not the principal face, then $M = \sum_{j=1}^N n_j \le  d_{\tau}$. Furthermore
strict inequality $M < d_{\tau}$ holds unless an endpoint of $\tau$ lies on the bisectrix.

(b) Suppose that $\tau = \pi(\phi)$ is the principal face and $n_{j_{*}} := \max_{1\le j\le N} n_j > d(\phi)$.
Then $n_j < d(\phi)$ for all $j \not= j_{*}$ and $\xi_{j_{*}} \in {\mathbb Q}$.

(c) Again suppose that $\tau = \pi(\phi)$ but now
$n_{j_{*}} = d(\phi)$. Then either $\phi_{\rm pr}(x,y) = c (y^2 + b xy + d x^2)^n$ 
is a power of a quadratic form or 
\begin{equation}\label{q=1}\phi_{\rm pr}(x,y) \ = \  c \,  x^{\alpha} y^{\beta} \prod_{j=1}^N (y - \xi_j x^m)^{n_j}, 
\end{equation}
$n_j < n_{j_{*}}$ for all $j\not= j_{*}$ and $\xi_{j_{*}} \in {\mathbb Q}$.

\end{lemma}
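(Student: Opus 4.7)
All three parts rest on the identity
\[d_\tau - M \;=\; \frac{q\alpha + m\beta + (qm - q - m)M}{q+m},\]
combined with the observation that the endpoints of $\tau$ are $A = (\alpha + mM, \beta)$ and $B = (\alpha, \beta + qM)$, so that the position of $\tau$ relative to the bisectrix is controlled by the signs of $\beta - (\alpha + mM)$ and $\alpha - (\beta + qM)$.

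For part (a), if $\tau \neq \pi(\phi)$ then the line through $\tau$ meets the bisectrix at $(d_\tau, d_\tau)$, and this point cannot lie in the interior of $\tau$; hence $\tau$ is weakly above the bisectrix ($\beta \ge \alpha + mM$) or weakly below it ($\alpha \ge \beta + qM$). In the first case I substitute into the identity and use the factorisation $m^2 + qm - q - m = (m-1)(m+q)$ to get $d_\tau - M \ge \alpha + (m-1)M \ge 0$. Tracing through the equalities shows strict inequality fails only when $\alpha = 0$, $m = 1$ and $\beta = M$, at which point the endpoint $A = (M, M)$ sits on the bisectrix. The other case is identical under the symmetric swap $(q, \alpha) \leftrightarrow (m, \beta)$.

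For part (b), the hypothesis $n_{j_*} > d(\phi) = d_\tau$ forces $M > d(\phi)$, and the identity combined with $\alpha, \beta \ge 0$ yields $d(\phi)(q+m) \ge qmM > qm\,d(\phi)$, i.e.\ $(q-1)(m-1) < 1$. Since $q, m$ are positive integers with $q \le m$ and $\gcd(q, m) = 1$, this forces $q = 1$ and produces the advertised factorisation. Rearranging the identity gives $M \le d(\phi)(1 + 1/m)$, so
\[\sum_{j \neq j_*} n_j \;=\; M - n_{j_*} \;<\; M - d(\phi) \;\le\; d(\phi)/m \;\le\; d(\phi),\]
and hence $n_j < d(\phi)$ individually for each $j \neq j_*$. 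After the reduction at the start of Section \ref{basic-bound}, $\phi$ has rational coefficients, so $\phi_\tau(1, t) = c t^\beta \prod(t - \xi_j)^{n_j}$ lies in $\mathbb{Q}[t]$; Galois conjugation over $\mathbb{Q}$ permutes the $\xi_j$ while preserving multiplicities, and because $n_{j_*}$ strictly dominates every other $n_j$ its Galois orbit is a singleton, giving $\xi_{j_*} \in \mathbb{Q}$.

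For part (c), the same computation with $>$ replaced by $\ge$ still yields $(q-1)(m-1) \le 1$, whose only solution with $\gcd(q, m) = 1$ and $q \le m$ is again $q = 1$ (the alternative $q = m = 2$ is ruled out by coprimality), and we now have $M \le d(\phi)(1 + 1/m)$. When $m \ge 2$ the previous argument still gives $n_j < n_{j_*}$ for $j \neq j_*$ and $\xi_{j_*} \in \mathbb{Q}$, placing us in the second alternative of the lemma. The delicate regime is $m = q = 1$, where the bound degenerates to $M \le 2\,d(\phi)$, so at most one other index $j' \neq j_*$ can satisfy $n_{j'} = n_{j_*}$. If no such $j'$ exists the Galois argument again closes the case; if it does, then $n_{j_*} + n_{j'} = 2\,d(\phi)$ saturates the bound, forcing $\alpha = \beta = 0$ and $\phi_{\mathrm{pr}} = c \bigl((y - \xi_{j_*} x)(y - \xi_{j'} x)\bigr)^{n_{j_*}}$. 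The main obstacle is finishing this last subcase: I argue that the Galois orbit of $\xi_{j_*}$ has size at most two (since only $\xi_{j_*}$ and $\xi_{j'}$ carry the maximal multiplicity), so the elementary symmetric functions $\xi_{j_*} + \xi_{j'}$ and $\xi_{j_*} \xi_{j'}$ are Galois-fixed and hence rational, giving the desired $\phi_{\mathrm{pr}} = c(y^2 + bxy + dx^2)^n$ with $b, d \in \mathbb{Q}$ and $n = n_{j_*}$.
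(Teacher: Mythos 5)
Your proof is correct and follows essentially the same route as the paper: it rests on the homogeneous-distance identity \eqref{homo-degree}, on bounding $M$ against $d_\tau$, and on the Galois conjugation argument for rationality. Two remarks on the comparison. First, in part (a) you compute $d_\tau - M$ and factor $m^2 + qm - q - m = (m-1)(m+q)$, whereas the paper chains three inequalities directly on $d_\tau$; both give the same conclusion and the same equality case, so this is cosmetic. Second, and more substantively, in parts (b) and (c) you explicitly derive $q=1$ from $(q-1)(m-1)\le 1$ together with $\gcd(q,m)=1$, $q\le m$, and then use the tighter consequence $M\le d(\phi)(1+1/m)$. The paper's proof of (c) instead jumps from ``exists a second root of the same multiplicity'' to $M\ge 2d(\phi)$, derives $q=m=1$, and in the complementary alternative simply says ``reasoning as part (b)'' --- but it never explicitly justifies why $q=1$ holds there, even though the second alternative of the statement (display \eqref{q=1}) requires it. Your derivation of $q=1$ from the weaker hypothesis $n_{j_*}=d(\phi)$ (so $M\ge d(\phi)$) closes that small gap cleanly, so your write-up is in fact a touch more complete than the paper's on this point.
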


\begin{proof} To prove (a), let us suppose that $\tau$ lies below the bisectrix so that the left endpoint
$(\alpha, \beta + q M)$ of $\tau$ satisfies $\alpha \ge \beta + q M$. Hence by \eqref{homo-degree},
\[d_{\tau} \ \ge \ \frac{ q (\beta + q M)  + q m M}{q+m} \ \ge \ 
 \frac{ (\beta + q M)  +  m M}{q+m} \ \ge \ M\]
since $q\ge 1$. If the left endpoint $(\alpha, \beta + q M)$ does not lie on the bisectrix, then we have the
strict inequality $\alpha > \beta + q M$ which in turn implies that the strict inequality $d_{\tau} > M$ holds. 

To prove (b), suppose that $n_{j_{*}} > d(\phi)$ and that there exists a $1\le j \le N$ with $j \not= j_{*}$
such that $d(\phi) \le n_j$. Then $M > 2 d(\phi)$ and so by \eqref{homo-degree},
\[d(\phi) \ \ge \ \frac{q m M}{m+q} \ > \ \frac{ 2 qm}{m+q} d(\phi) \ \ge \ d(\phi)\]
which is a contradiction.
Hence $n_j < d(\phi)$ for all $j\not= j_{*}$. To show that $\xi_{j_{*}} \in {\mathbb Q}$,
we argue by contradiction once again and suppose that the degree of $\xi_{j_{*}}$ over the rationals is at least 2.
Since the conjugates of $\xi_{j_{*}}$ all lie among the roots $\{\xi_j \}_{j=1}^N$, we would be able to find
a conjugate $\xi_j$ with $j\not= j_{*}$. As all conjugates must have the same multiplicity, we see that $n_j = n_{j_{*}}$
which we have just seen is impossible. 

Finally to prove (c),  suppose that $n_{j_{*}} = d(\phi)$ and that there is a $1\le j \le N$ with $j\not= j_{*}$
and $n_j = n_{j_{*}}$. Hence $M \ge 2 n_{j_{*}} = 2 d(\phi)$ and so
\[d(\phi) \ \ge \ \frac{q m M}{m+q} \ \ge \ \frac{ 2 qm}{m+q} d(\phi),\]
implying $2qm/(m+q) \le 1$ and hence $q = m =1$. Plugging this back into \eqref{homo-degree}, we have
\[2d(\phi) \ = \alpha + \beta + M \]
and since $M\ge 2 d(\phi)$, this gives a contradiction unless
$\alpha = \beta = 0$ and $M = 2 d(\phi)$. Hence $d(\phi) = n_{j_{*}} = n_j$ and
\[ \phi_{\rm pr}(x,y) \ = \  c (y - \xi_1 x)^{n} (y - \xi_2 x)^{n} \ = \ c (y^2 + b xy + d x^2)^{n} \]
is  a power of a quadratic form with $n = n_j$. Otherwise we have $n_j < n_{j_{*}} = d(\phi)$ for
all $j\not= j_{*}$ and reasoning as part (b), we also conclude that $\xi_{j_{*}} \in {\mathbb Q}$.

\end{proof}

We will use the notation $m_{\rm pr}(\phi)$ to denote the maximal multiplicity among the roots
$\{\xi_j\}_{j=1}^N$ appearing the factorisation \eqref{phi-factored} of the principal part $\phi_{\rm pr}$
of $\phi$ when $\pi(\phi)$ is a compact edge.
The following theorem contains our basic estimate for $S_0$ which will imply Theorem \ref{main} via Theorem \ref{CoV}. 

\begin{theorem}\label{initial-bound} Suppose that the coefficients of $\phi \in {\mathbb Q}[X,Y]$
are units in ${\mathbb Z}_p$. 

(a) \ If $\pi(\phi)$ is a compact edge, then
\begin{equation}\label{edge-bound}
|S_0(\phi, p^s)| \ \lesssim_{{\rm deg} \, \phi} \ \begin{cases} p^{-s/d(\phi)} & \text{ if } m_{\rm pr}(\phi) < d(\phi) \\
					    s p^{-s/d(\phi)} & \text{ if } m_{\rm pr}(\phi) = d(\phi) \\
					    p^{-s/m_{\rm pr}} & \text{ if } m_{\rm pr}(\phi) > d(\phi) \end{cases}.
\end{equation}

(b) \ If $\pi(\phi)$ is a vertex, then
\begin{equation}\label{vertex}
|S_0(\phi, p^s)| \ \lesssim_{{\rm deg} \, \phi} \ s p^{-s/d(\phi)}
\end{equation}
and this improves to $|S_0(\phi, p^s)| \lesssim p^{-s/d(\phi)}$ when the vertex $\pi(\phi) = (1,1)$. 

(c) \ If $\pi(\phi)$ is an unbounded edge, then
\begin{equation}\label{unbounded}
|S_0(\phi, p^s)| \ \lesssim_{{\rm deg} \, \phi} \ p^{-s/d(\phi)}.
\end{equation}

\end{theorem}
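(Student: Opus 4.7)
The plan is to bound each term in the face decomposition \eqref{S_0-decomp}. For a compact face $\tau$, the admissible $\vec l \in \mathbb{N}^2$ with $F(\vec l) = \tau$ are explicit: for a compact edge on the line $qt_1 + mt_2 = n$ with $\gcd(m,q) = 1$, it is the ray $\vec l = r(q,m)$, $r \ge 1$; for a vertex, it is the open two-dimensional lattice cone between the normals of the two adjacent edges. Each $\vec l$ contributes
\[
p^{-l_1 - l_2} \iint_{|x|=|y|=1} \mathrm{e}\bigl(p^{-(s-N(\vec l))}[\phi_\tau + p\,g_\tau]\bigr)\,dx\,dy,
\]
and the task reduces to bounding this inner integral and summing in $\vec l$.

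For part (a), fix a compact edge $\tau$ and write $\vec l = r(q,m)$. When $rn \ge s$ the phase is a $p$-adic integer, so the inner integral is at most $1$, and the geometric sum $\sum_{r \ge s/n} p^{-r(m+q)}$ is $\lesssim p^{-s/d_\tau}$. When $rn < s$, I would apply the one-variable Hensel-based oscillatory estimate of Section \ref{hensel} to the polynomial $y \mapsto \phi_\tau(x,y) + p\,g_\tau(x,y)$ for fixed unit $x$. The factorisation \eqref{phi-factored} shows that on $|y|=1$ the $y$-roots can only come from factors $(y^q - \xi_j x^m)^{n_j}$ with $|\xi_j|=1$, and have multiplicity at most $m_\tau := \max_j n_j$; this yields an inner bound $\lesssim_{\deg \phi} p^{-(s-rn)/m_\tau}$, reducing the oscillatory contribution to
\[
\sum_{1 \le r < s/n} p^{-r(m+q) - (s-rn)/m_\tau}.
\]
This is a geometric-type sum whose behaviour depends on the sign of $d_\tau - m_\tau$, equivalently the slope $n/m_\tau - (m+q)$: if $d_\tau > m_\tau$ the tail at $r \sim s/n$ dominates and gives $p^{-s/d_\tau}$; if $d_\tau < m_\tau$ the head at $r=1$ dominates and gives $p^{-s/m_\tau}$; if $d_\tau = m_\tau$ all $O(s)$ terms are of equal size, producing $s\,p^{-s/d_\tau}$. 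Applied to the principal face, where $d_\tau = d(\phi)$ and $m_\tau = m_{\rm pr}(\phi)$, this is exactly the trichotomy \eqref{edge-bound}; for non-principal compact faces, Lemma \ref{simple-lemma}(a) gives $m_{\tau'} \le M \le d_{\tau'} \le d(\phi)$, so the contributions are absorbed.

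For part (b), $\phi_\tau(x,y) = cx^\alpha y^\beta$ with $|c|=1$, and admissible $\vec l$ fill a two-dimensional cone; applying the one-variable Hensel estimate in $x$ and $y$ separately and summing $p^{-l_1-l_2}$ over the non-oscillatory region $\alpha l_1 + \beta l_2 \ge s$ produces the logarithmic factor $s$, yielding $s\,p^{-s/d(\phi)}$. For the special case $\pi(\phi) = (1,1)$, the mixed derivative $\partial_x\partial_y\phi_\tau = c$ is a unit and a direct two-variable Hensel argument isolates a single stationary contribution of size $p^{-s}$, removing the log. For part (c), since $\vec l$ has strictly positive components, the unbounded principal face never appears in \eqref{S_0-decomp}; by convexity any compact face satisfies $d_\tau < d(\phi)$ strictly, so part (a) applied to each gives $\lesssim p^{-s/d_\tau} \le p^{-s/d(\phi)}$ with no log factor.

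The main technical obstacle is the sharp form of the one-variable Hensel-based oscillatory estimate on $|y|=1$, namely $|\int_{|y|=1} \mathrm{e}(p^{-k} P(y))\,dy| \lesssim_{\deg P} p^{-k/M_P}$ with $M_P$ the maximal multiplicity of a root of $P$ on the unit sphere, together with the control of exceptional unit values of $x$ where two factors $(y^q - \xi_j x^m)$ and $(y^q - \xi_{j'} x^m)$ become proportional modulo a large power of $p$ (thereby inflating the effective multiplicity). Verifying that such collision sets occupy small enough measure to be absorbed, and that the perturbation $p\,g_\tau$ cannot itself inflate root multiplicities, both follow from the higher-order Hensel lemma of Section \ref{hensel}, but the casework there is the technical heart of the argument.
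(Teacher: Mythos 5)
Your overall strategy — the face decomposition \eqref{S_0-decomp}, an explicit parametrization of the admissible $\vec l$ for each compact face, and reduction to a one-variable Hensel-type oscillatory estimate in $y$ for fixed unit $x$ — is precisely the route the paper takes. But there are two genuine gaps.

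\textbf{The $L=1$ boundary case.} Proposition~\ref{vc} only applies for $s\ge 2$; it says nothing when the effective exponent $L := s - N(\vec l)$ equals $1$. Your sketch treats the two regimes $N(\vec l)\ge s$ (integrand trivial) and $N(\vec l) < s$ (one-variable Hensel), but the borderline $N(\vec l) = s-1$ falls through: there the inner integral is a genuine finite-field exponential sum, and the Hensel machinery gives nothing. This case is where a substantial part of the paper's effort goes — for compact edges with $d_\tau\ge 2$ one uses Weil's bound to gain a factor $p^{-(1/2 - 1/d_\tau)}$, for non-principal edges with $d_\tau<2$ one exploits linearity of $\phi_\tau$ in one variable, and for principal compact edges with $1\le d_\tau<2$ one invokes Cluckers' uniform bound; for vertices the analogous $L=1$ term requires a Weil-type estimate and, when $d_{e_2}<2$, an explicit computation of the finite-field sum $\iint_{|x|=|y|=1}\mathrm{e}(p^{-1}cxy^{\beta})$. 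Without all of this, the trichotomy in \eqref{edge-bound} and the vertex bound \eqref{vertex} are not established.

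\textbf{The $\pi(\phi)=(1,1)$ improvement.} Your proposal of a two-variable Hensel argument for the vertex is a genuinely different idea, but the paper has no two-variable Hensel machinery to invoke, and your sketch doesn't account for the adjacent compact edges: when $\pi(\phi)=(1,1)$, the edges abutting $(1,1)$ have $m_\tau = d_\tau = 1$ and your coarse bound $p^{-L/m_\tau}$ summed in $\vec l$ yields $s\,p^{-s}$, not $p^{-s}$. What saves the day is the refined statement in Proposition~\ref{vc} that the contribution \emph{vanishes} when $n_j=1$ — your bound $p^{-L/m_\tau}$ discards exactly this vanishing. For the vertex itself, the paper does not isolate a stationary point; it computes $I_\tau^1$ and $I_\tau^2$ as explicit geometric sums and exhibits the cancellation $I_\tau^1 + I_\tau^2 = p^{-s}$ by hand. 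Your Hessian observation ($\det \equiv -c^2 \not\equiv 0 \pmod p$, since $(1,1)$ being a vertex forces at least one of the $x^2,y^2$ coefficients to vanish) is correct and could in principle support a two-variable stationary-phase alternative, but that would require developing and proving a $p$-adic two-dimensional non-degenerate stationary phase estimate from scratch.

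One smaller remark: the ``collision'' of root clusters you worry about at the end does not occur — the finite set $\mathcal{P}$ is chosen precisely so that $|\xi_j-\xi_{j'}|_p=1$ for $j\ne j'$, and since $|x|=1$ this separation persists after multiplying by $x^m$, so the clusters $Z_j$ are automatically disjoint. That part of your concern is vacuous; the real technical weight, which your proposal skips, lies in the $L=1$ finite-field sums and in tracking the vanishing for multiplicity-one roots.
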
 

To see how Theorem \ref{initial-bound} implies Theorem \ref{main} under the assumption $h(\phi) = h(\phi_{\rm pr})$
(which we can make by Theorem \ref{CoV}), we need the following characterisation of $h(\phi_{\rm pr})$.

\begin{proposition}\label{h-homo} Suppose that the principal face $\pi(\phi)$ of $\phi \in {\mathbb Q}[X,Y]$ 
is a compact edge. Then
\[ h(\phi_{\rm pr}) \ = \ \max(d(\phi), m_{{\rm pr}}(\phi)).\]
\end{proposition}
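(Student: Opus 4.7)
Set $d = d(\phi)$; note that $d(\phi_{\rm pr}) = d$ and $m_{\rm pr}(\phi_{\rm pr}) = m_{\rm pr}(\phi)$ because $\phi$ and $\phi_{\rm pr}$ share the same principal face and hence the same factorisation \eqref{phi-factored}. The plan is to prove $h(\phi_{\rm pr}) \ge \max(d, m_{\rm pr}(\phi))$ by exhibiting coordinate systems that realise each of the two values separately, and then to prove the matching upper bound by invoking Varchenko's adaptedness criterion in two dimensions.

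The bound $h(\phi_{\rm pr}) \ge d$ is immediate from the identity coordinate system. For $h(\phi_{\rm pr}) \ge m_{\rm pr}(\phi)$ we need only handle the case $m_{\rm pr}(\phi) > d$. First I would observe that $m_{\rm pr}(\phi) > d$ forces $q = 1$ in the edge equation $q t_1 + m t_2 = n$: using \eqref{homo-degree} and $n_{j_*} \le M$, the inequality $n_{j_*} > d$ rearranges to $1/q + 1/m > 1$, which for $1 \le q \le m$ with $\gcd(q,m) = 1$ can only hold when $q = 1$. Lemma \ref{simple-lemma}(b) then provides a unique root $\xi_{j_*} \in \mathbb{Q}$ of multiplicity $n_{j_*} = m_{\rm pr}(\phi)$. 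I would apply the polynomial change of coordinates $y \mapsto \tilde y + \xi_{j_*} x^m$, so that the factor $(y - \xi_{j_*} x^m)^{n_{j_*}}$ becomes $\tilde y^{n_{j_*}}$. A direct computation shows that the coefficient of $\tilde y^{n_{j_*}}$ in the transformed principal part is a nonzero multiple of $x^{(m+1)d - m n_{j_*}}$; since $n_{j_*} > d$ this exponent is strictly less than $n_{j_*}$, so the bisectrix meets the horizontal boundary $k_2 = n_{j_*}$ of the new Newton polyhedron at $(n_{j_*}, n_{j_*})$, giving a new Newton distance equal to $m_{\rm pr}(\phi)$.

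For the matching upper bound I would invoke Varchenko's characterisation of adapted coordinates in two dimensions (see \cite{IM-adapted}): when the principal face of the polynomial is a compact edge, a coordinate system is adapted precisely when the maximum root multiplicity of the principal part does not exceed its Newton distance; when the principal face is an unbounded edge, the coordinates are automatically adapted. Thus if $m_{\rm pr}(\phi) \le d$, the original coordinates are already adapted for $\phi_{\rm pr}$, yielding $h(\phi_{\rm pr}) = d$; and if $m_{\rm pr}(\phi) > d$, the coordinates constructed above have a horizontal unbounded principal face (since $k_2 = n_{j_*}$ contains only a single monomial of the transformed support), so they too are adapted and $h(\phi_{\rm pr}) = m_{\rm pr}(\phi)$.

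The main obstacle will be verifying, in the case $m_{\rm pr}(\phi) > d$, that the new coordinates are adapted --- equivalently, that the new principal face is the horizontal unbounded edge at height $n_{j_*}$. This requires careful tracking of the monomials produced by the substitution, and essential use of the uniqueness of the maximal-multiplicity root $\xi_{j_*}$ from Lemma \ref{simple-lemma}(b) to rule out additional support points at the lowest $\tilde y$-degree.
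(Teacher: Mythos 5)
Your proof is correct, but it takes a genuinely different route from the paper's. The paper disposes of Proposition \ref{h-homo} by citing the result from \cite{W-igusa} that for a quasi-homogeneous rational polynomial $\psi$ one has $h(\psi) = \max(d_h(\psi), m_{\mathbb{Q}}(\psi))$ with $m_{\mathbb{Q}}(\psi) := \max(\alpha,\beta, n_j : \xi_j \in \mathbb{Q})$, and then spends its effort reconciling $m_{\mathbb{Q}}(\phi_{\rm pr})$ with $m_{\rm pr}(\phi)$: the compactness of the principal edge forces $\max(\alpha,\beta) < d(\phi)$, and Lemma \ref{simple-lemma}(b) guarantees that any root of multiplicity exceeding $d(\phi)$ is rational. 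You instead re-derive the content of that citation directly: the clean observation that $m_{\rm pr}(\phi) > d(\phi)$ forces $q=1$ (and hence $\kappa_2/\kappa_1 = m \in \mathbb{N}$), the rationality of the principal root via Lemma \ref{simple-lemma}(b), the explicit shear $y \mapsto \tilde y + \xi_{j_*} x^m$, the computation that the lowest vertex of the transformed Newton polyhedron sits at $\bigl((m+1)d - m n_{j_*}, n_{j_*}\bigr)$ strictly above the bisectrix (so the new principal face is the unbounded horizontal edge and the new Newton distance is $n_{j_*}$), and finally the Ikromov--M\"uller adaptedness criterion to upgrade the Newton distance to the height in each of the two cases. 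This is essentially the argument behind the statement the paper cites (it is the ``Moreover'' clause of Theorem \ref{AdaptedCharacterization}, which the paper records later but does not invoke here), so your proof trades a black-box reference for a self-contained, slightly longer argument --- a reasonable trade given the paper's stated aim of elementariness. One small note: the upper-bound paragraph already subsumes the lower-bound paragraph (once a coordinate system is shown adapted, its Newton distance \emph{equals} the height, not merely lower-bounds it), so the ``exhibit two coordinate systems'' framing is redundant, though harmless.
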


In \cite{W-igusa} it was shown that when $\psi(x,y) = c \, x^{\alpha} y^{\beta} \prod_{j=1}^N (y^q - \xi_j x^m)^{n_j}$
is a quasi-homogeneous polynomial with rational coefficients, then $h(\psi) = \max(d_h(\psi), m_{\mathbb Q}(\psi))$
where
\[ m_{\mathbb Q}(\psi) \ := \ \max(\alpha, \beta, n_j : \xi_j \in {\mathbb Q})\]
and $d_h(\psi)$ is the homogeneous distance given in \eqref{homo-degree}.
This result in \cite{W-igusa} is a minor adjustment of the corresponding euclidean result found in \cite{IM-adapted}.

Note that if the principal face $\pi(\phi)$ 
is a compact edge, then the left endpoint $(\alpha, \beta + q M)$ lies above the bisectrix (so that $\alpha < \beta + qM$)
and the right endpoint $(\alpha + m M, \beta)$ lies below the bisectrix (so that $\beta < \alpha + m M$). Hence
by \eqref{homo-degree} we see that $\max(\alpha, \beta) < d(\phi) $
and so by Lemma \ref{simple-lemma} part (b), we see that 
\begin{equation}
h(\phi_{\rm pr}) \ = \ \max(d_h(\phi_{\rm pr}), m_{\mathbb Q}(\phi_{\rm pr})) \ = \ \max(d(\phi), m_{\rm pr}(\phi)).
\end{equation}
 
Recall that the Varchenko exponent $\nu(\phi) = 1$ if and only $h(\phi)\ge 2$ and
there is an adapted coordinate system in which the principal face is a vertex.
According to Ikromov and M\"uller in \cite{IM-adapted} (Corollaries 2.3 and 4.3), a coordinate system $z = (x,y)$ is adapted
to $\phi(x,y)$
if and only if one of the following conditions  is satisfied:

(a) $\pi(\phi)$ is a compact edge and $m_{\rm pr}(\phi) \le d(\phi)$;

(b) $\pi(\phi)$ is a vertex; or

(c) $\pi(\phi)$ is an unbounded edge.

Hence if the principal face $\pi(\phi)$  of our polynomial $\phi(x,y)$ is a vertex $(d,d)$ (where necessarily
$d = d(\phi)$), then the
coordinates $z = (x,y)$ are adapted and so $\nu(\phi) = 1$ when $d = h(\phi) \ge 2$ and $\nu(\phi) = 0$
when $d=1$. In this case the bound in \eqref{vertex} implies $|S_0(\phi, p^s)| \le C s^{\nu(\phi)} p^{-s/h(\phi)}$,
establishing Theorem \ref{main} in this case. When $\pi(\phi)$ is an unbounded edge, the coordinates are
adapted and hence $d(\phi) = h(\phi)$. Thus \eqref{unbounded} establishes Theorem \ref{main}. 

Next suppose that $\pi(\phi)$ is a compact edge and $m_{\rm pr}(\phi) \not= d(\phi)$. Then by Proposition \ref{h-homo},
the bound \eqref{edge-bound} implies $|S_0(\phi, p^s)| \le C p^{-s/h(\phi)}$ since we are assuming 
$h(\phi) = h(\phi_{\rm pr})$. This establishes Theorem \ref{main} in this case.
Finally suppose that $\pi(\phi)$ is a compact edge and $m_{\rm pr}(\phi) = d(\phi)$. Then $h(\phi) = d(\phi)$ by
Proposition \ref{h-homo}. 
If $\phi \in {\mathcal E}$, then \eqref{edge-bound} establishes
Theorem \ref{main} in this case. 

Hence we may suppose that $\phi \notin {\mathcal E}$. 
In this case, 
we claim that there is a coordinate system in which
the principal face is a vertex so that \eqref{vertex} can be used to show that Theorem \ref{main} holds in
this case as well. Lemma \ref{simple-lemma} part (c) implies that 
\begin{equation}\label{nj<nj*}
\phi_{\rm pr}(x,y) \ = \  c \, x^{\alpha} y^{\beta} \prod_{j=1}^N (y - \xi_j x^m)^{n_j}
\end{equation}
with $\xi_{j_{*}} \in {\mathbb Q}$. Recall that when $\phi_{\rm pr}(x,y) = c (y^2 + b xy + d x^2)^n$
is a power of a quadratic, then
$\phi_{\rm pr}(x,y) =  \ c \, (y - \xi_1 x)^n (y - \xi_2)^n$
where $\xi_1, \xi_2 \in {\mathbb Q}$ since $\phi \notin {\mathcal E}$. This is of the form \eqref{nj<nj*}.
It is simple matter to see that the change of variables $(x,y) \to (x, y + \xi_{j_{*}}^m)$ transforms our
polynomial to one whose principal part is a vertex. 

Therefore Theorem \ref{main} follows from Theorems \ref{CoV} and \ref{initial-bound}.

\section{Proof of Theorem \ref{initial-bound}}\label{proof-basic}

A key step in the proof of the bounds for the local sum $S_0(\phi, p^s)$ in Theorem \ref{initial-bound}
 will be to freeze one of the variables and 
estimate a sum in the other variable.  Equivalently, we will reduce to bounding a one dimensional oscillatory integral and for this,
we will employ the following useful  bound.

\begin{proposition}\label{vc} Let $\psi \in {\mathbb Z}_p [X]$ and suppose there is an $n\ge 1$
such that $\psi^{(n)}(x)/n! \not\equiv 0$ mod $p$ for all $x$ lying in some subset
$S \subseteq {\mathbb Z}/p {\mathbb Z}$. Then there exists a constant $C$, depending on the degree of $\psi$ 
(and not on $S$ or $p$) such that
\begin{equation}\label{vc-bound} 
\Bigl| \sum_{x_0 \in S} \int_{B_{p^{-1}}(x_0)} {\rm e}(p^{-s} \psi(x)) \, dx \Bigr|  \ \le \ C p^{-s/n} 
\end{equation}
holds for all $s\ge 2$. Furthermore when $n =1$,  the sum in \eqref{vc-bound} vanishes.
\end{proposition}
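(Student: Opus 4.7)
I proceed by induction on $n$.

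\textbf{Base case} ($n = 1$). For every $x_0 \in S$ we have $|\psi'(x_0)|_p = 1$, and by the ultrametric inequality the same bound extends to the whole ball $B_{p^{-1}}(x_0)$. Lemma \ref{hensel-lemma} then implies that $x \mapsto \psi(x)$ is a measure-preserving bijection of $B_{p^{-1}}(x_0)$ onto $B_{p^{-1}}(\psi(x_0))$. After the change of variable $u = \psi(x)$, each integral becomes
\[
\int_{B_{p^{-1}}(\psi(x_0))} {\rm e}(p^{-s} u)\, du,
\]
which vanishes for $s \ge 2$ by orthogonality, since the additive character $u \mapsto {\rm e}(p^{-s} u)$ is nontrivial on $p\mathbb{Z}_p$ whenever $s \ge 2$. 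Every summand therefore vanishes.

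\textbf{Inductive step} ($n \ge 2$). Assume the result for all smaller orders. For each fixed $x_0 \in S$ I would decompose $B_{p^{-1}}(x_0) = G(x_0) \sqcup B(x_0)$ according to a valuation threshold $\delta_\ast$ to be chosen at the end, with
\[
G(x_0) = \{x \in B_{p^{-1}}(x_0) : |\psi'(x)|_p \ge p^{-\delta_\ast}\}, \qquad B(x_0) = B_{p^{-1}}(x_0) \setminus G(x_0).
\]
On each sub-ball of $G(x_0)$ where $|\psi'|_p$ is constant, a local application of Hensel's lemma linearises $\psi$ onto an image ball, and after partitioning $G(x_0)$ further one forces the image radius to be smaller than $p^{-s}$, so that the ensuing character integrals vanish as in the base case. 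On $B(x_0)$, the hypothesis $|\psi^{(n)}(x)/n!|_p = 1$ transfers, by differentiating the Taylor expansion term by term, to a controlled Taylor coefficient of $\psi'$ of order $n-1$; the generalised Hensel lemma of Section \ref{hensel} then yields the sublevel-set bound
\[
|B(x_0)| \ \lesssim \ p^{-\delta_\ast/(n-1)},
\]
and this part of the integral is handled by the trivial bound on the integrand. Balancing the two contributions by choosing $\delta_\ast$ to equate $p^{\delta_\ast - s}$ and $p^{-\delta_\ast/(n-1)}$ produces a per-point decay of order $p^{-s/n - 1}$; summing over the at most $p$ elements of $S$ then yields the bound $p^{-s/n}$ as claimed.

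\textbf{Main obstacle.} The delicate step is arranging the sublevel-set estimate on $B(x_0)$ with exponent $1/(n-1)$ and a constant depending only on $\deg \psi$, rather than on $p$. This is exactly where the generalisation of Hensel's lemma in Section \ref{hensel} is needed, since the standard identity $(\psi')^{(n-1)}/(n-1)! = n \cdot \psi^{(n)}/n!$ loses its force whenever $p \mid n$, and a naive induction would introduce an unwanted dependence on the small primes. With that generalisation in hand, the balancing argument and the passage from per-point decay to the uniform sum bound are routine.
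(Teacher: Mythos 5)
Your base case $n=1$ is correct and matches the paper's proof in spirit: both establish that each summand vanishes separately when the phase is nonstationary mod $p$, you via a Hensel change of variables, the paper via a Taylor expansion to order one (the argument recycled from the proof that ${\mathcal I}_0=0$ in Section \ref{proof-basic}).

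The inductive step, however, has a genuine gap and is not the paper's argument. Three concrete problems.
\emph{First}, the decomposition is keyed to $|\psi'|$ with a sublevel-set exponent $1/(n-1)$, whereas the van der Corput induction requires splitting on $|\psi^{(n-1)}|$: the complement has \emph{linear} measure $\lesssim p^{-\delta_*}$ in the threshold (by the single-derivative Hensel estimate applied to $\psi^{(n-1)}$), and it is on the good set that one re-invokes the statement at order $n-1$ after rescaling. Your sketch never actually invokes the inductive hypothesis, so what you wrote is a one-shot good/bad split, not an induction.
\emph{Second}, the balancing you describe is inconsistent for $n\ge 3$. With the split on $|\psi'|$: for the character integrals over $G(x_0)$ to vanish under your partition-and-linearise scheme you need every sub-ball radius $r$ and every $\delta\le\delta_*$ to satisfy $p^{\delta-s} < r < p^{-\delta}$, which forces $\delta_* \lesssim s/2$. (Also note the vanishing criterion goes the opposite direction from what you wrote: the image ball must have radius \emph{larger} than $p^{-s}$, not smaller, for the character to be nonconstant.) But the bad-set bound $\lesssim p^{-\delta_*/(n-1)}$ is then at best $p^{-s/(2(n-1))}$, which is strictly weaker than $p^{-s/n}$ as soon as $n\ge 3$. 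The per-point bound $p^{-s/n-1}$ followed by summing over the ``at most $p$'' elements of $S$ does not come out of the sketch, and in any case that accounting is the wrong one: the true gain in summing over $S$ has to come from the total bad set being a union of $O_{\deg\psi}(1)$ balls (since $\psi'$, or $\psi^{(n-1)}$, has boundedly many roots), not from $|S|\le p$.
\emph{Third}, the ``main obstacle'' you flag (small primes dividing $n$) is real, but the resolution you point to is not automatic; it needs the normalisation $\psi^{(k)}/k!$ and a bounded loss controlled by $n\le\deg\psi$, which has to be tracked.

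The paper's proof sidesteps the good/bad dichotomy entirely: it sets $t=s/n$, substitutes $x=u_0+p^t u$ so that the Taylor terms of order $\ge n$ are killed by the character, and then classifies the residues $u_0\in\mathbb{Z}/p^t\mathbb{Z}$ into sets $R_1,\dots,R_n$ according to a descending chain of valuation conditions on $\psi^{(j)}(u_0)$, $1\le j\le n-1$. The generalised Hensel lemma shows each $R_j$ with $j<n$ contains at most $\deg\psi$ residues, and on $R_n$ the inner integral $T_{x_0,u_0}$ vanishes outright by nonstationarity. The bound $(n-1)\deg\psi\cdot p^{-t}$ follows immediately, with no balancing step at all. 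So the two approaches are genuinely different, but yours as written does not close.
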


When $S = {\mathbb Z}/p {\mathbb Z}$, Proposition \ref{vc} was proved in \cite{W-jga} using a higher order Hensel lemma.
However the proof given in \cite{W-jga} also gives the strengthening stated here where we consider a truncated
integral (or sum) on which we know some derivative of $\psi$ is non-degenerate. We will outline the proof of
Proposition \ref{vc} in Section \ref{hensel}.

Now let us recall the decomposition \eqref{S_0-decomp} of the oscillatory integral representation of $S_0(\phi, p^s)$
and write $S_0(\phi, p^s) = \sum_{\tau \in \Delta(\phi)} I_{\tau}$ where
\[ I_{\tau} \ := \ \sum_{\substack{\vec l \in \mathbb{N}^2 \\ F(\vec l) = \tau}} p^{-l_1 - l_2} 
\iint_{|x|, |y| = 1} {\rm e}(p^{-s + N(\vec l)} (\phi_{\tau} (x,y) + p g_{\tau}(x,y))) \, dx dy .\]
We will provide a bound for each $I_{\tau}$ with $\tau \in \Delta(\phi)$. We split into two cases:
the case in which $\tau$ is a compact edge and the case in which $\tau$ is a vertex.

\subsection*{When $\tau$ is a compact edge} In this case, $\phi_{\tau}$ is a quasi-homogeneous polynomial
which can be factored
\[\phi_{\tau}(x,y) \ = \   c \, x^{\alpha} y^{\beta} \prod_{j=1}^N (y^q - \xi_j x^m)^{n_j};\]
see \eqref{phi-factored}. If ${\vec l}$ is such that $F({\vec l}) = \tau$, then the vector ${\vec l}$ is perpendicular
to the line $\{(t_1, t_2) :  q t_1 + m t_2 = n \}$ containing $\tau$ if and only if ${\vec l} = l (q, m)$ for some
integer $l \ge 1$. Hence $N(\vec l) = l n$ and so
\begin{equation}\label{Itau} 
I_{\tau} \ = \ \sum_{l \geq 1} p^{-l(m + q)} \iint_{|x|, |y| = 1} {\rm e}(p^{-s + ln} (\phi_{\tau}(x,y) + p g_{\tau}(x,y))) \, dx dy.
\end{equation}
Set $\kappa = \left\lceil \frac{s}{n} \right\rceil - 1$ so that $s = \kappa n + r$ where $1\le r \le n$ and split 
$I_{\tau} = I_{\tau}^1 + I_{\tau}^2$ into two parts where $I_{\tau}^1 = \sum_{l \ge \kappa +1} I_{\tau, l}$
and  $I_{\tau}^2 = \sum_{l \le \kappa} I_{\tau, l}$; here $I_{\tau, l}$ denotes the integral in \eqref{Itau}. Note that
$l\ge \kappa + 1$ precisely when $s - l n \le 0$ and hence the integrand in $I_{\tau, l}$ is identically equal to 1.
Thus $I_{\tau, l} = (1-p^{-1})^2$ for such $l$ and so
\[ I_{\tau}^1 \ = \ (1-p^{-1})^2 \sum_{l \ge \kappa + 1} p^{-l(m+q)} \ \lesssim \  p^{- (\kappa+ 1)(m+q)} \ \le \ p^{- s(m+q)/n}
\ = \ p^{-s/d_{\tau}} .\]
Since $d_{\tau} \le d(\phi)$, we have
\begin{equation}\label{I1}
|I_{\tau}^1|  \ \lesssim \ p^{-s/d(\phi)} 
\end{equation}
which is smaller than the bounds \eqref{edge-bound}, \eqref{vertex}, \eqref{unbounded}
in the statement of Theorem \ref{initial-bound}. Hence \eqref{I1} gives an acceptable contribution for Theorem \ref{initial-bound}.

Let us now concentrate on bounding each integral $I_{\tau,l}$ arising in $I_{\tau}^2$ when $L := s - nl \ge 2$.
In this case we will use Proposition \ref{vc} to bound
\[{\mathcal I}_{\tau, L} \ := \  \iint_{|x|, |y| = 1} {\rm e}(p^{-L} (\phi_{\tau}(x,y) + p g_{\tau}(x,y))) \, dx dy.\]

Set
\[X \ = \ \{(x_0,y_0) \in \left[ \mathbb{Z}/p \mathbb{Z} \right]^2 : x_0 y_0  \neq 0\}\] 
and note that the region of integration in the integral ${\mathcal I}_{\tau, L}$ is precisely the set of 
$(x,y) \in {\mathbb Z}_p^2 $ such that $(x,y)$ is congruent mod $p$ to an 
element of $X$. 

We will split $X = Z_0 \cup Z_1 \cup \cdots \cup Z_N$ according to roots $\{\xi_j\}$ of $\phi_{\tau}$. All the roots arising
from the quasi-homogeneous polynomials $\phi_{\tau}$ with $\tau \in \Delta(\phi)$ are algebraic numbers and hence lie in a finite field
extension of ${\mathbb Q}_p$. Therefore each $p$-adic absolute value $|\cdot| = |\cdot|_p$ extends uniquely to these elements.
Elementary considerations (see \cite{W-igusa}) allow us to identify a finite collection of primes so that for all
primes $p$ not in this collection, $|\xi_j|_p = 1$ and $|\xi_j - \xi_k|_p = 1$ whenever $j\not= k$. We will include this
finite set into our exceptional set of primes ${\mathcal P}$.

We define
\begin{IEEEeqnarray*}{rCll}
Z_0 & := & \{(x_0,y_0) \in X : \phi_{\tau}(x_0,y_0) \not\equiv 0 \ {\rm mod} \ p  \} & \ \ \text{and} \\
Z_j & := & \{(x_0,y_0) \in X : |y_0^q - \xi_j x_0^m| < 1\}  & \text{ for $1 \leq j \leq N$} .
\end{IEEEeqnarray*}
Note that $Z_j$ may be empty if there are no ordered pairs of elements of $(x_0, y_0) \in (\mathbb{Z}/p \mathbb{Z})^2$ for which the inequality defining $Z_j$ holds. Furthermore, the $Z_j$ are disjoint: if $|y_0^q - \xi_j x_0^m| < 1$ and $j \neq j'$ then the ultrametric inequality shows that $|y_0^q - \xi_{j'} x_0^m| = |y_0^q - \xi_j x_0^m + \xi_j x_0^m - \xi_{j'} x_0^m| = 1$, since $|\xi_j x_0^m - \xi_{j'} x_0^m| = 1$ by the separation of the roots.


This gives us a disjoint decomposition of $X$. Accordingly, we split ${\mathcal I}_{\tau, L} = \sum_{j=0}^N {\mathcal I}_j$ where
\[{\mathcal I}_{j} \ := \  \sum_{(x_0, y_0) \in Z_j} \iint_{B_{p^{-1}}(x_0, y_0)} 
{\rm e}(p^{-L} (\phi_{\tau}(x,y) + p g_{\tau}(x,y))) \, dx dy .\]
Here $B_{p^{-1}}(x_0, y_0) = \{ (x,y) \in {\mathbb Z}_p^2 : \max(|x-x_0|, |y-y_0|) \le p^{-1}\}$ consists of those
elements of $\mathbb{Z}_p^2$ that are congruent to $(x_0, y_0)$ modulo $p$.

First we claim that ${\mathcal I}_0 = 0$. In fact each integral appearing in the sum defining ${\mathcal I}_0$ vanishes.
Fix $(x_0, y_0) \in Z_0$ and let $I_{x_0,y_0}$ denote the corresponding integral in ${\mathcal I}_0$. By
a simple extension of  Euler's homogeneous function theorem to the 
quasi-homogeneous case, we have $(qx, m y) \cdot \nabla \phi_{\tau} (x,y) = n \phi_{\tau}(x,y)$ and so
$\nabla \phi_{\tau} (x_0, y_0) \not\equiv 0$ mod $p$. Set
$\varphi(x,y) = \phi_{\tau}(x,y) + p g_{\tau}(x,y)$ and note that $\nabla \varphi(x_0, y_0) \not\equiv 0$ mod $p$.
We write $I_{x_0, y_0} =$
\begin{IEEEeqnarray*}{CCl}
 & \sum_{\substack{(u_0, v_0) \in (\mathbb{Z}/p^{L-1} \mathbb{Z})^2 \\ (u_0, v_0) \equiv (x_0, y_0) \text{ mod $p$}}} & \iint_{B_{p^{-L + 1}}(u_0, v_0)} {\rm e} (p^{-L}\varphi(x,y)) \, dx dy \\
= & \sum_{\substack{(u_0, v_0) \in (\mathbb{Z}/p^{L-1} \mathbb{Z})^2 \\ (u_0, v_0) \equiv (x_0, y_0) \text{ mod $p$}}} (p^{-(L+1)})^ 2 & \iint_{\mathbb{Z}_p^2} {\rm e}(p^{-L} \varphi(u_0 + p^{L-1} u, v_0 + p^{L-1} v)) \, du dv
\end{IEEEeqnarray*}
and note that 
\[  \varphi(u_0 + p^{L-1} u, v_0 + p^{L-1} v)) \equiv \varphi(u_0, v_0) + p^{L-1} \nabla\varphi(u_0, v_0) \cdot (u,v) \ {\rm mod} \ p^L \]
since $L\ge 2$. Hence $I_{x_0, y_0} = $
\[\sum_{\substack{(u_0, v_0) \in (\mathbb{Z}/p^{L-1} \mathbb{Z})^2 \\ (u_0, v_0) \equiv (x_0, y_0) \text{ mod $p$}}} (p^{-L + 1})^2 
{\rm e}(p^{-L} \varphi(u_0, v_0)) \iint_{\mathbb{Z}_p^2} \chi(p^{-1} \nabla \varphi(u_0, v_0) \cdot (u,v)) \, du dv\]
and each integral above vanishes because $\nabla \varphi(u_0, v_0) \not\equiv 0$ mod $p$.  Thus ${\mathcal I}_0 = 0$.

Let us now examine the other terms ${\mathcal I}_j, \,  1\le j \le N$. We have
\begin{IEEEeqnarray*}{rCCl}
{\mathcal I}_j \ & = & \sum_{(x_0, y_0) \in Z_j} & \iint_{B_{p^{-1}}(x_0, y_0)} {\rm e}(p^{-L}\varphi(x,y)) \, dx dy \\
& = & \sum_{x_0 \in \mathbb{Z}/p \mathbb{Z} \setminus \{ 0 \}} \sum_{y_0 \in Z_{j, x_0}} & \iint_{B_{p^{-1}}(x_0, y_0)} {\rm e}(p^{-L} \varphi(x,y)) \, dx dy
\end{IEEEeqnarray*}
where $Z_{j, x_0} = \{y_0 \in \mathbb{Z}/p\mathbb{Z} \setminus \{ 0 \} : (x_0, y_0) \in Z_j \}$. 

Interchanging the sum in $y_0$ and the $x$ integration, we have 
\[{\mathcal I}_j  \ = \ \sum_{x_0 \in \mathbb{Z}/p\mathbb{Z} \setminus \{ 0 \}} \int_{B_{p^{-1}}(x_0)} 
\Bigl( \sum_{y_0 \in Z_{j, x_0} } \int_{B_{p^{-1}}(y_0)} {\rm e}(p^{-L} \varphi(x,y)) \, dy \Bigr) \, dx.\]
Denoting Inner$_{x_0}(x)$ as the sum in $y_0$, we have
\[{\mathcal I}_j \ = \ \sum_{x_0 \in \mathbb{Z}/p \mathbb{Z} \setminus \{ 0 \}} \int_{B_{p^{-1}}(x_0)} \text{Inner}_{x_0}(x) \, dx .\]
For any fixed $x_0 \in \mathbb{Z}/p \mathbb{Z} \setminus \{0\}$ and $x$ such that $|x - x_0| \leq p^{-1}$, define $\psi_x(y)$ to be the function 
$\varphi(x,y)$. Thus we have
\[\text{Inner}_{x_0}(x) \ = \ \sum_{y_0 \in Z_{j,x_0}} \int_{B_{p^{-1}(y_0)}} {\rm e}(p^{-L} \psi_x(y)) \, dy,\]
putting us in a position to employ our  bound \eqref{vc-bound} since it is straightforward to check that
$\psi_x^{(n_j)} (y_0) / n_j !\not\equiv 0$ mod $p$ for every $y_0 \in Z_{j, x_0}$. Hence the uniform bound
\[|\text{Inner}_{x_0}(x)| \ \leq \ C_{\deg \phi} \ p^{-L/n_j}\]
holds and when $n_j = 1$, we have in fact ${\rm Inner}_{x_0}(x) = 0$. This implies that
\begin{equation}\label{Ij-0}
|{\mathcal I}_j| \ \leq \ C_{\deg \phi}\  p^{-L/n_j} \ \ {\rm but \ when} \ n_j = 1, \ \ {\mathcal I}_j = 0 .
\end{equation}
Therefore $|{\mathcal I}_{\tau, L}| \leq C_{\deg \phi} \,  p^{-L/m_{\tau}}$, where $m_{\tau}$ 
is the maximal multiplicity of the roots $\{\xi_j\}$ of $\phi_{\tau}$. This gives us a bound on the sum of
those terms in $I_{\tau}^2$ where $L = s - l n \ge 2$; write $I_{\tau}^2 = I_{\tau}^{2,1} + I_{\tau}^{2,2}$
where 
$I_{\tau}^{2,1} \ := \  \sum_{\substack{1\le l \le \kappa \\ s - l n \ge 2}} I_{\tau, l} $
so that
\[
| I_{\tau}^{2,1}| \ \le \ C \, 
\Bigl(\sum_{1 \le l \le \kappa} p^{-l (m + q)} p^{ln/m_{\tau}} \Bigr) p^{-s/m_{\tau}} \ = \
\Bigl(\sum_{1 \le l \le \kappa} p^{-l n [\frac{1}{d_{\tau}} - \frac{1}{m_{\tau}}]} \Bigr) p^{-s/m_{\tau}} .
\]
Hence
\begin{equation}\label{I2-1}
|I_{\tau}^{2,1}| \ \lesssim_{\deg \phi} \ \begin{cases} p^{-s/d_{\tau}} & \text{ if } m_{\tau} < d_{\tau} \\
					    s p^{-s/d_{\tau}} & \text{ if } m_{\tau} = d_{\tau} \\
					    p^{-s/m_{\tau}} & \text{ if } m_{\tau} > d_{\tau} 
\end{cases}.
\end{equation}

If $\tau = \pi(\phi)$ is the principal face, then $d_{\tau} = d(\phi)$ and $m_{\tau} = m_{\rm pr}(\phi)$ so that
\eqref{I2-1} gives an acceptable contribution to the bound \eqref{edge-bound} in Theorem \ref{initial-bound}. Now suppose
that the compact edge $\tau$ is not the principal face. By Lemma \ref{simple-lemma} part (a), we conclude
that $m_{\tau} \le d_{\tau}$. Furthermore,
if the endpoint of $\tau$ does not lie on the bisectrix, then in fact $m_{\tau} < d_{\tau}$ and so \eqref{I2-1} implies 
$|I_{\tau}^{2,1}| \lesssim p^{-s/d_{\tau}} \lesssim p^{-s/d(\phi)}$ and this is an acceptable bound as before.

Finally suppose that an endpoint of $\tau$ lies on the bisectrix. Then the principal face $\pi(\phi)$ is a vertex and $d_{\tau} = d(\phi)$.
The bound \eqref{I2-1} gives an acceptable contribution to the bound \eqref{vertex} in Theorem \ref{initial-bound}
{\bf unless} the vertex $\pi(\phi)$ is $(1,1)$. In this case $m_{\tau} = d_{\tau} = d(\phi) = 1$ and for formula \eqref{homo-degree}
for $d_{\tau}$ shows two possible outcomes: (1) either $q = M = \alpha = 1$ and $\beta = 0$ in which case $\phi_{\tau} = a x (y - \xi x^m)$ for some
$\xi \in {\mathbb Q}$ or (2)  $q = M = m = \beta = 1$ and $\alpha = 0$ in which case $\phi_{\tau} = a y (y - \xi x)$ for some $\xi \in {\mathbb Q}$.
In either case, ${\mathcal I}_{\tau, L} = {\mathcal I}_0 + {\mathcal I}_j$ where $n_j = 1$. Hence by \eqref{Ij-0} we see
that ${\mathcal I}_{\tau, L} = 0$ implying in turn $I_{\tau}^{2,1} = 0$ in this case.

It remains to treat $I_{\tau}^{2,2}$ where we are summing the integrals $I_{\tau, l}$ for $1\le l \le \kappa$
and $s - ln = 1$. The condition $s - l n = 1$ can only occur if
$l = \kappa$ and $s \equiv 1$ mod $n$. Hence $I_{\tau}^{2,2} = I_{\tau, \kappa}$ and $s - \kappa n = 1$ so that
\[I_{\tau}^{2,2} \ = \ p^{-\kappa(m + q)} \iint_{|x|, |y| = 1} {\rm e}(p^{-1} (\phi_{\tau}(x,y))) \, dx dy \]
which is an exponential sum over a finite field. We claim that the bound
\begin{equation}\label{field-acceptable}
|I_{\tau}^{2,2}| \ \lesssim_{\phi} \ p^{-s/d(\phi)}
\end{equation}
holds and as we have seen before,  this is an acceptable bound.

First we can apply the Weil bound \cite{Weil} for finite field sums (say to the $y$ integral)
to see that
\[
| I_{\tau}^{2,2}| \ \lesssim_{\phi} \ \,  p^{-(\frac{1}{2} - \frac{1}{d_{\tau}})} p^{-s/d_{\tau}};
\]
here we used the identity $\kappa (m + q) = (s-1) (m + q)/n = (s-1)/d_{\tau}$. Therefore if $d_{\tau} \ge 2$, we obtain
the bound \eqref{field-acceptable}. We now treat the case when $d_{\tau} < 2$. 

First suppose that $\tau$ is not the principal face. Then $m_{\tau} \le d_{\tau}$
by Lemma \ref{simple-lemma} which implies $m_{\tau} =1$ (and hence $d_{\tau} \ge 1$)
so that $\phi_{\tau} (x,y) = a x^{\alpha} y^{\beta} (y^q - \xi x^m)$
for some $\xi \in {\mathbb Q}$. If $\tau$  lies below
the bisectrix, then the second coordinate of the left endpoint must be equal to 1. Hence $\beta  + q = 1$ implying $q =1$, $\beta =0$
and so $\phi_{\tau}(x,y) = a x^{\alpha} (y - \xi x^m)$. Similarly if $\tau$ lies above the bisectrix, then $\alpha=0$, $m=1$ and
so $\phi_{\tau}(x,y) = a y^{\beta} (y - \xi x)$. In either case $\phi_{\tau}$ is either linear in $y$ or linear in $x$ which implies
that one of the integrals
\[ \int_{|y| \le  1} {\rm e}(p^{-1} (\phi_{\tau}(x,y))) \,  dy, \ \ \ \  \int_{|x|\le 1} {\rm e}(p^{-1} (\phi_{\tau}(x,y))) \, dx \]
is equal to 0. Hence 
\[ \Bigl| \iint_{|x|, |y| = 1} {\rm e}(p^{-1} (\phi_{\tau}(x,y))) \, dx dy \Bigr| \ \lesssim \ p^{-1} \]
which in turn implies
\[
| I_{\tau}^{2,2}| \ \le \ C_{\phi} \  p^{-(1 - \frac{1}{d_{\tau}})} p^{-s/d_{\tau}} \ \le \ C_{\phi} \ p^{-s/d(\phi)},
\]
establishing \eqref{field-acceptable} in this case.

Now suppose that $\tau$ is the principal face. Then $d_{\tau} = d(\phi)$. If $d_{\tau} < 1$, then $\tau$ cannot contain
any lattice points away from the coordinate axes. Hence $\alpha = \beta =0$, $M = 1$ and so
$\phi_{\tau}(x,y) = a (y^q - \xi x^m)$ for some $\xi \in {\mathbb Q}$. Using the formula \eqref{homo-degree},
$d_{\tau} = qm/(m+q)$ and the restriction $d_{\tau}  < 1$ shows $q=1$. Therefore
 \begin{equation}\label{m-bound} \iint_{|x|, |y| = 1} {\rm e}(p^{-1} (\phi_{\tau}(x,y))) \, dx dy  \ = \  - p^{-1} 
\int_{|x| = 1} {\rm e}(p^{-1} a x^m) \, dx    
\end{equation}
and if $m=1$, the above integral is $O(p^{-2})$ leading to the bound 
\[
| I_{\tau}^{2,2}| \ \le \ C_{\phi} \  p^{-(2- \frac{1}{d_{\tau}})} p^{-s/d_{\tau}} \ \le \ C_{\phi} \ p^{-s/d(\phi)}
\]
which proves \eqref{field-acceptable}. When $m\ge 2$,  we are stuck with the bound $O(p^{-3/2})$ arising from a character sum estimate for the integral in
\eqref{m-bound} but in this case, we have $d_{\tau} = m/(m+1) \ge 2/3$ and so 
\[
| I_{\tau}^{2,2}| \ \le \ C_{\phi} \  p^{-(\frac{3}{2}- \frac{1}{d_{\tau}})} p^{-s/d_{\tau}} \ \le \ C_{\phi} \ p^{-s/d(\phi)}
\]
which once again proves \eqref{field-acceptable}. 

Finally suppose that $\tau$ is the principal face but $d_{\tau} \ge 1$. Since 
\[ \iint_{|x|, |y| = 1} {\rm e}(p^{-1} (\phi_{\tau}(x,y))) \, dx dy\ = \
\iint_{{\mathbb Z}_p^2} {\rm e}(p^{-1} (\phi_{\tau}(x,y))) \, dx dy \ + \ O(p^{-1}) , \]
we can use Cluckers's bound in \cite{Cluckers} to conclude that 
\[ \Bigl| \iint_{|x|, |y| = 1} {\rm e}(p^{-1} (\phi_{\tau}(x,y))) \, dx dy \Bigr| \ \lesssim \ p^{-1/d_{\tau}} \]
which implies 
\[
| I_{\tau}^{2,2}| \ \le \ C_{\phi} \  p^{-s/d_{\tau}} \ \le \ C_{\phi} \ p^{-s/d(\phi)},
\]
establishing \eqref{field-acceptable} in all cases. 

\subsection*{When $\tau$ is a vertex} 
We will now consider the case where $\tau  = (\alpha, \beta)$ is a vertex. This means that $\phi_{\tau}(x,y) = c x^{\alpha} y^{\beta}$ 
a monomial. However, the sum over $\vec l$ will consist of more than just integer multiples of a fixed vector. 

Assume that $\tau$ is the endpoint of two edges $e_1$ and $e_2$, where $e_2$ lies below (to the right of) $\tau$ and
$e_1$ lies above (to the left of) $\tau$. Hence if the edges are compact,
\[ e_1 \subset \{ (t_1, t_2) : q_1 t_1 + m_1 t_2 = n_1 \} \ \ {\rm and} \ \ 
e_2 \subset \{ (t_1, t_2) : q_2 t_1 + m_2 t_2 = n_2 \} \]
for some positive integers $(q_j, m_j), j=1,2$ with ${\rm gcd}(q_j, m_j) = 1$. If the $e_2$ is unbounded
(that is, it is a horizontal line), then $e_2 \subset \{(t_1, t_2) : t_2 = \beta \}$. Likewise if $e_1$ is unbounded (vertical),
then $e_1 \subset \{(t_1, t_2) : t_1 = \alpha \}$.

If both edges $e_1$ and $e_2$ are compact, then $F(\vec l) = \tau$ if and only if $\vec l = (l_1, l_2)$ satisfies 
\[\frac{m_1}{q_1} < \frac{l_2}{l_1} < \frac{m_2}{q_2} .\]
If one of the edges is unbounded, the corresponding upper or lower restriction of the ratio $l_2/l_1$ is removed; for example,
if $e_2$ is an infinite horizontal edge and $e_1$ is compact, then $F(\vec l) = \tau$ if and only if $m_1/q_1 < l_2 / l_1. $
We will, without loss of generality, assume that $\alpha \leq \beta$. 

Then $N(\vec l) = l_1 \alpha + l_2 \beta$ and our integral $I_{\tau}$ to bound is
\begin{IEEEeqnarray*}{rCl}
I_{\tau} & = & \sum_{\substack{\vec l \in \mathbb{N}^2 \\F(\vec l) = \tau}} p^{-l_1 - l_2} \iint_{|x|, |y| = 1} {\rm e}(p^{-s + N(\vec l)} (c x^{\alpha} y^{\beta} + p g_{\tau}(x,y))) \, dx dy \\
& = &  \sum_{\substack{l_1, l_2 \geq 1 \\ l_1 \frac{r_1}{t_1} < l_2 < l_1 \frac{r_2}{q_2}}} p^{-l_1 - l_2} \iint_{|x|, |y| = 1} 
{\rm e}(p^{-s + N(\vec l)} (c x^{\alpha} y^{\beta} + p g_{\tau}(x,y))) \, dx dy 
\end{IEEEeqnarray*}
with the understanding that if one of edges $e_1$ and/or $e_2$ is unbounded, then the corresponding restriction  on the ratio
$l_2/l_1$ does not appear.

We decompose $I_{\tau} = I_{\tau}^1 + I_{\tau}^2 + I_{\tau}^3$  into three pieces according
to whether $N(\vec l) \ge s$, $N(\vec l) = s-1$ and $N(\vec l) \le s-2$, respectively. When $N(\vec l) \ge s$, the integrand is
identically equal to 1 and so
\[I_{\tau}^{1}  \ \le \  (1 - p^{-1})^2 \sum_{\substack{l_1, l_2 \geq 1 \\ N(\vec l) \geq s \\  l_2/l_1 < r_2/q_2}} p^{-l_1 - l_2} \]
if the edge $e_2$ is compact. When $e_2$ is unbounded, the only restriction on the sum over $\vec l = (l_1, l_2)$ with $l_1, l_2 \ge 1$ is
 $N(\vec l) = l_1 \alpha + l_2 \beta \ge s$. 

Suppose $e_2$ is compact so that $d_{e_2} = n_2/(m_2 + q_2)$ and 
$\alpha q_2 + \beta m_2 = n_2$ since $\tau = (\alpha, \beta) \in e_2$. First consider the case $\alpha < \beta$. Then since
$l_2 = - \alpha l_1 /\beta + N/\beta$ where $N = N(\vec l)$, we have
\[ l_2/l_1 \ = \ - \alpha/\beta + N/\beta l_1  \  < \ m_2/q_2 \ \ \ {\rm so} \ \ \ \frac{N q_2}{q_2 \alpha + m_2 \beta} \ = \ \frac{N q_2}{n_2} \ < \ l_1.\]
Hence in this case, since $l_1 + l_2 = N/\beta + (1 - \alpha/\beta) l_1$,
\begin{equation}\label{a<b}
|I_{\tau}^1| \ \lesssim \ \sum_{N \ge s} p^{-N/\beta}  \sum_{l_1 > N q_2/ n_2} p^{-(1 - \alpha/\beta) l_1} \ \lesssim \
\sum_{N\ge s} p^{-N/d_{e_2}} \ \lesssim \ p^{-s/d_{e_2}}.
\end{equation}
When $\alpha = \beta$, then $\beta = d_{e_2} = d(\phi)$ and $N/d_{e_2} = l_1 + l_2$ so that 
\begin{equation}\label{a=b}
|I_{\tau}^1| \ \lesssim \ \sum_{N\ge s } p^{-N/d_{e_2}} \sum_{{\vec l} \, : \,  l_1 + l_2 = N/d_{e_2}} \, 1  \ \lesssim \ 
\sum_{N\ge s} N p^{-N/d_{e_2}} \ \lesssim \ s \, p^{-s/d_{e_2}} .
\end{equation}

Suppose now that $e_2$ is an unbounded (horizontal) edge so that $\beta = d_{e_2} = d(\phi)$. We again have
$l_1 \alpha + l_2 \beta = N(\vec l) = N$ so that
\[ |I_{\tau}^1| \ \lesssim \ \sum_{N \ge s} p^{-N/\beta}  \sum_{l_1 \ge 1} p^{-(1 - \alpha/\beta) l_1} , \]
implying that  \eqref{a<b} holds if $\alpha < \beta$ and \eqref{a=b} holds if $\alpha = \beta$. Hence it all cases, we have
\begin{equation}\label{a-b}
|I_{\tau}^1| \ \lesssim \ \begin{cases} p^{-s/d_{e_2}} & \text{ if } \alpha \not= \beta\\
					    s p^{-s/d_{e_2}} & \text{ if } \alpha = \beta
					    
\end{cases} .
\end{equation}

Next let us turn our attention to 
\[I_{\tau}^{3}  \ = \ \sum_{\substack{l_1, l_2 \geq 1 \\ s - N(\vec l) \geq 2 \\ m_1/q_1 < l_2/l_1 < m_2/q_2}} p^{-l_1 - l_2} \iint_{|x|, |y| = 1} 
{\rm e}(p^{-s + N (\vec l)} \varphi_{\tau}(x,y) ) \, dx dy \]
where $\varphi_{\tau}(x,y) =  c x^{\alpha} y^{\beta} + p g_{\tau}(x,y)$. Since $\nabla \varphi_{\tau} (x,y) \not\equiv 0$ mod $p$ for any
$(x,y) \in {\mathbb Z}_p^2$ satisfying $|x| = |y| = 1$, the same argument above showing that ${\mathcal I}_0 = 0$
shows that $I_{\tau}^3 = 0$. 

Finally we treat 
\[ I_{\tau}^2  \ = \  \sum_{\substack{l_1, l_2 \geq 1 \\ N(\vec l) = s - 1 \\ r_1/t_1 < l_2/l_1 < r_2/q_2}} p^{-l_1 - l_2} 
\iint_{|x|, |y| = 1} {\rm e}(p^{-1} c x^{\alpha} y^{\beta})  \, dx dy \]
by using the same estimates establishing \eqref{a<b} and \eqref{a=b} although we lose a bit since we need to replace $s$ by $s-1$
in these estimates. On the other hand we gain 
a factor $O(p^{-1/2})$ from the above integral by an easy variant of Weil's bound (a direct computation suffices here;
see \cite{Vino-elementary}). Hence we obtain
\begin{equation}\label{a-b-2}
|I_{\tau}^2| \ \lesssim \  p^{-(\frac{1}{2} - \frac{1}{d_{e_2}})} \begin{cases} p^{-s/d_{e_2}} & \text{ if } \alpha \not= \beta\\
					    s p^{-s/d_{e_2}} & \text{ if } \alpha = \beta
					    
\end{cases} 
\end{equation}
which implies the same bound in \eqref{a-b} when $d_{e_2} \ge 2$. When $d_{e_2} < 2$, then $\alpha = 1$ and so
$1\le d_{e_2} < 2$ (we are assuming
the vertex $\tau  = (\alpha, \beta)$ lies on or above the bisectrix). Therefore the integral in $I_{\tau}^2$ can be computed;
\begin{equation}\label{computation}
\iint_{|x|, |y| = 1} {\rm e}(p^{-1} c x y^{\beta})  \, dx dy =  (1 - p^{-1}) \int_{|z|=1} {\rm e}( c z) \, dz = 
-(1-p^{-1}) p^{-1}. 
\end{equation}
Here we simply made the change of variables $ z = x y^{\beta}$ in the $x$ integral. This improves the bound in \eqref{a-b-2} to
\[
|I_{\tau}^2| \ \lesssim \  p^{-(1 - \frac{1}{d_{e_2}})} \begin{cases} p^{-s/d_{e_2}} & \text{ if } \alpha \not= \beta\\
					    s p^{-s/d_{e_2}} & \text{ if } \alpha = \beta
					    
\end{cases} 
\]
and so we obtain the same bound in \eqref{a-b} in all cases. Hence 
\begin{equation}\label{a-b-2-improved}
|I_{\tau}| \ \lesssim \  \ \begin{cases} p^{-s/d_{e_2}} & \text{ if } \alpha \not= \beta\\
					    s p^{-s/d_{e_2}} & \text{ if } \alpha = \beta
					    
\end{cases} 
\end{equation}
and since $d_{e_2} \le d(\phi)$ and the case $\alpha = \beta$ only occurs if the vertex $\tau$ is the principal face, we see that
\eqref{a-b-2-improved} gives an acceptable contribution to the estimates in Theorem \ref{initial-bound} except in the solitary
case that the principal face of $\phi$ is $(1,1)$ where we need to improve the bound for $I_{\tau}$ to $|I_{\tau}| \lesssim p^{-s}$ in order
to finish the proof of Theorem \ref{initial-bound}.

\subsection*{The last step}
When the vertex $\tau = (1,1)$, then $\tau = \pi(\phi)$ and $d(\phi) =1$. Here we will show the
improved bound $|I_{\tau}| \lesssim p^{-s}$ which will conclude the proof of Theorem \ref{initial-bound}.

Recall the decomposition $I_{\tau} = I_{\tau}^1 + I_{\tau}^2 + I_{\tau}^3$
above where $I_{\tau}^3 = 0$ and in this case,
\begin{equation}\label{(1,1)-1}
I_{\tau}^{1}  \ = \  (1 - p^{-1})^2 \sum_{\substack{{\vec l} : \, N(\vec l) \geq s \\  m_1/q_1 <  l_2/l_1 < m_2/q_2}} p^{-l_1 - l_2} 
\end{equation}
with the understanding that $l_1, l_2\ge 1$ and if one of edges $e_1$ and/or $e_2$ is unbounded, then the corresponding restriction  on the ratio
$l_2/l_1$ does not appear. Furthermore using \eqref{computation}, we have
\begin{equation}\label{(1,1)-2}
I_{\tau}^2  \ = \  -(1-p^{-1}) p^{-1}  \sum_{\substack{{\vec l}: \,  N(\vec l) = s - 1 \\ r_1/t_1 < l_2/l_1 < r_2/q_2}} p^{-l_1 - l_2}  .
\end{equation}
Thus we see that in this case (when $\tau = (1,1)$),  $I_{\tau}$ is a difference of two explicit sums of positive terms. A careful
examination of this difference will exhibit the additional cancellation we seek. 

We will show this when the edges $e_1$ and $_2$ are both infinite so the restrictions on ${\vec l} = (l_1, l_2)$ are 
$l_1, l_2 \ge 1$ and either $N(\vec l) \ge s$ or $N(\vec l) = s-1$. The case when one edge (or both) is compact is similar. In this case,
$N(\vec l) = l_1 + l_2$ and so
\[ I_{\tau}^1 \ = (1 - p^{-1})^2 \sum_{\substack{l_1, l_2 \ge 1 \\  l_1 + l_2 \ge s}} p^{-l_1 - l_2} \ = \
 (1 - p^{-1})^2 \sum_{N \ge s} (N-1) p^{-N} \]
and by the geometric series formula, 
\[
I_{\tau}^1 \ = \ (s-1) p^{-s}  \ + \ - (s-2) p^{-s-1}.
\]
In a similar but easy manner,
\[
 I_{\tau}^2 \ = \  - (1 - p^{-1}) p^{-1} \sum_{\substack{l_1, l_2 \ge 1 \\  l_1 + l_2 = s-1}} p^{-l_1 - l_2} \ = \
 - (1 - p^{-1}) (s-2) p^{-s} 
\]
and so $I_{\tau} = I_{\tau}^1 + I_{\tau}^2 = p^{-s}$ which shows the desired cancellation between the
two terms $I_{\tau}^1$ and $I_{\tau}^2$. 

This completes the proof of Theorem \ref{initial-bound}.

\section{Proof of Theorem \ref{CoV}}\label{sect-CoV}

Here we give the proof of Theorem \ref{CoV} by developing an appropriate variant of
an algorithm due Ikromov and M\"uller in \cite{IM-adapted} which produces an adapted 
coordinate system for any real-analytic function $f$. This algorithm constructs a series of changes
of variables, and except for the final one, all are given by a simple polynomial map. The goal
will be to show that the polynomial change of variables reached by the penultimate stage 
satisfies the conclusion of Theorem \ref{CoV}.

\subsection{Conditions for Adapted Coordinate Systems}
For this section we will work entirely with \emph{real-analytic} functions $f$.
We will observe what happens when we apply the algorithm from \cite{IM-adapted} to a polynomial with rational coefficients.  

The key observation is the one made in \cite{W-igusa}: Corollary 2.3 from \cite{IM-adapted} is valid in any field $K$. 
The content of this corollary is to relate the roots of a quasi-homogeneous polynomial $f$ to its 
{\it homogeneous distance} $d(f)$. 
A polynomial $f \in K[X,Y]$ being quasi-homogeneous makes sense in any field $K$ and can be factored as
\[f(x,y) \ = \ c \, x^{\alpha} y^{\beta} \prod_{j=1}^N (y^q - \xi_j x^m)^{n_j}\]
where $c \in K$ and the  roots $\{\xi_j\}_{j=1}^N$ lie in some finite field extension of $K$. Here
${\rm gcd}(m,q) = 1$ and $\kappa_1 := q/n, \kappa_2 := m/n$ are the dilation parameters so that
$f(r^{\kappa_1} x, r^{\kappa_2} y) = r f(x,y)$ for $r>0$. Recall that the  homogeneous distance of $f$ is defined as
\[d(f) \ = \ \frac{1}{\kappa_1 + \kappa_2} \ = \ \frac{q \alpha + m \beta + q m M}{q + m}\]
where $M := \sum_{j=1}^N n_j$. Finally set $n_0 = \alpha$ and $n_{N+1} = \beta$.

We now reproduce the version Corollary 2.3 from \cite{IM-adapted} as it appeared in \cite{W-igusa}.
\begin{lemma}[\cite{IM-adapted}, \cite{W-igusa}]\label{QuasiHomogeneousHeight}
Let $f \in K[X,Y]$ be a quasi-homogeneous polynomial as above. Without loss of generality
suppose that $\kappa_2 \geq \kappa_1$ or $1\le q \le m$.
\begin{enumerate}
\item If there is a multiplicity $n_{j_*} > d(f)$ for some $0 \leq j_* \leq N+1$, then all the other multiplicities must be strictly less than $d(f)$; that is, $n_j < d(f)$  for all $0 \leq j \neq j_* \leq N+1$. In particular, there is at most one multiplicity $n_j$, 
$0 \leq j \leq N+1$ with $n_j > d(f)$.
\item If $\kappa_2/\kappa_1 \notin \mathbb{N}$, then $M = \sum_{j=1}^N n_j < d(f)$.
\item If $\kappa_2/\kappa_1 \in \mathbb{N}$, then $n_j \leq d(f)$ for any $1 \leq j \leq N$ such that $\xi_j \notin K$. 
\end{enumerate}
\end{lemma}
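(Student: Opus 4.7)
The plan is to base the entire argument on the arithmetic identity
\[
(q+m)\, d(f) \ = \ q\, n_0 \ + \ m\, n_{N+1} \ + \ qm \sum_{j=1}^N n_j,
\]
which is just the definition of $d(f) = (q\alpha + m\beta + qmM)/(q+m)$ rewritten with $n_0 = \alpha$ and $n_{N+1} = \beta$. Each multiplicity $n_j$ contributes to $(q+m)d(f)$ with a ``weight'' of $q$ (if $j=0$), $m$ (if $j=N+1$), or $qm$ (if $1 \leq j \leq N$). The three conclusions will follow from this identity by elementary inequalities, supplemented by a short Galois-theoretic observation for part (3).

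For part (1), I will argue by contradiction: assume that there exist distinct indices $j_*, j \in \{0,1,\ldots,N+1\}$ with $n_{j_*} > d(f)$ and $n_j \geq d(f)$, and split into cases according to whether each of $j_*, j$ lies in the endpoint set $\{0,N+1\}$ or the interior set $\{1,\ldots,N\}$. Keeping only the two contributions from $n_{j_*}$ and $n_j$ in the identity and discarding the remaining nonnegative terms, each case reduces to one of the impossible inequalities $1/q + 1/m > 2$, $1 > q$, $1 > m$, or a strict inequality $(q+m) > (q+m)$, all of which are ruled out by $1 \leq q \leq m$. This is the step where the asymmetry between the endpoint and interior weights actually matters, so care must be taken to enumerate the cases (and their symmetric partners) in order.

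Once part (1) is in hand, parts (2) and (3) follow quickly. For (2), the hypothesis $\kappa_2/\kappa_1 = m/q \notin \mathbb{N}$, together with $\gcd(m,q) = 1$ and $1 \leq q \leq m$, forces $q \geq 2$ and hence $m \geq q+1$, from which $(q-1)(m-1) \geq 2$ yields $qm > q+m$. Dropping the nonnegative endpoint terms in the identity gives $d(f) \geq qmM/(q+m) > M$ whenever $M > 0$, and the case $M = 0$ is immediate since $f \neq 0$ forces $d(f) > 0$. For (3), when $q = 1$, suppose some root $\xi_j \notin K$ had $n_j > d(f)$. Because $f \in K[X,Y]$, the absolute Galois group of $K$ permutes the roots $\{\xi_1,\ldots,\xi_N\}$ while preserving multiplicities; since $[K(\xi_j):K] \geq 2$, there is some Galois conjugate $\xi_{j'}$ with $j' \neq j$ and $n_{j'} = n_j > d(f)$, which directly contradicts part (1).

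The main obstacle I anticipate is purely bookkeeping in part (1): the endpoint multiplicities $n_0, n_{N+1}$ carry strictly smaller weights ($q$ and $m$) than the interior multiplicities ($qm$), so one has to keep track of the various mixed cases to verify that in each of them the identity forces an arithmetic impossibility. Once this is organized, parts (2) and (3) are short and use only part (1) together with standard facts about quasi-homogeneous polynomials and Galois conjugates.
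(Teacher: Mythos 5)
Your proof is correct, and it uses the same approach that the paper itself relies on for this family of statements. The paper does not prove Lemma \ref{QuasiHomogeneousHeight} directly (it defers to the cited sources), but the two ingredients you use --- the weighted identity $(q+m)\,d(f) = q\,n_0 + m\,n_{N+1} + qm\,M$ combined with elementary casewise inequalities on $q,m$, and a Galois-conjugate count to force the principal root into $K$ --- are precisely the ones deployed in the paper's own proof of the closely related Lemma \ref{simple-lemma}, so your argument is in the intended spirit and the case enumeration in part (1) is complete.
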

The corollary says that the multiplicity of every root $\xi_j, 1\le j \le N,$ is bounded by $d(f)$ unless
$\kappa_2/\kappa_1 \in {\mathbb N}$, 
in which case there is at most one root $\xi_j, 1\le j \le N$ with multiplicity exceeding $d(f)$. If
such a root exists, it necessarily lies in $K$ and we shall call it the {\it principal root} of $f$.

We will need the following theorem in \cite{IM-adapted}.

\begin{theorem}[Ikromov-M\"uller]\label{AdaptedCharacterization}
Let $f$ be a real-analytic function near the origin with $f(0,0) = 0$ and $\nabla f(0,0) = 0$.
Then the given coordinates are not adapted to $f$ if and only if the following hold true:
\begin{enumerate}
\item The principal face $\pi(f)$ of the Newton polyhedron is a compact edge. It thus lies on a uniquely
determined line $\kappa_1 t_1 + \kappa_2 t_2 = 1$ with $\kappa_1, \kappa_2 > 0$. Swapping coordinates
if necessary, we may assume $\kappa_2 \ge \kappa_1$.
\item $\frac{\kappa_2}{\kappa_1} \in \mathbb{N}$. Note that this implies that $q = 1$ in \eqref{phi-factored}.
\item The inequality $m_{\text{pr}}(f) > d(f)$ holds.
\end{enumerate}
Moreover, in this case, an adapted coordinate system for $f_{\text{pr}}$ is given by $y_1 := x_1$, $y_2 := x_2 - a x_1^m$, where $a$ is the root of $f_{\text{pr}}$ in the sense of \eqref{phi-factored} with the maximum multiplicity. The height of $f_{\text{pr}}$ is then given by $h(f_{\text{pr}}) = m_{\text pr}(f)$.
\end{theorem}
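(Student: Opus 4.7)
The plan is to prove both directions of Theorem \ref{AdaptedCharacterization}. For the forward implication (conditions $(1)$--$(3)$ imply the coordinates are not adapted), I would exhibit an explicit polynomial substitution that strictly increases the Newton distance. For the converse, I would argue by contrapositive with a short case analysis; the technically hard case requires tracking the Newton polyhedron under an arbitrary analytic change of variables.

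For the forward direction, assume $(1)$--$(3)$ hold. Because $(2)$ forces $q = 1$ in the factorisation \eqref{phi-factored} of $f_{\text{pr}}$, and because $m_{\text{pr}}(f) > d(f)$, Lemma \ref{QuasiHomogeneousHeight} part $(3)$ applied to $f_{\text{pr}}$ (with $K = \mathbb{R}$) forces the maximal-multiplicity root $a = \xi_{j_*}$ to lie in $\mathbb{R}$. Perform the substitution $y_1 = x_1$, $y_2 = x_2 - a y_1^m$, where $m = \kappa_2/\kappa_1$. In the new coordinates the factor $(x_2 - a x_1^m)^{m_{\text{pr}}(f)}$ becomes the pure power $y_2^{m_{\text{pr}}(f)}$, while the factors $(x_2 - \xi_j x_1^m)^{n_j}$ for $j \neq j_*$ become $(y_2 + (a - \xi_j) y_1^m)^{n_j}$ with nonvanishing constants. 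Reading off the monomial of $y_2^{m_{\text{pr}}(f)}$ (obtained by setting $y_2 = 0$ in every other factor), one finds a nonzero coefficient in front of $y_1^{\alpha + m\beta + m(M - m_{\text{pr}}(f))} y_2^{m_{\text{pr}}(f)}$, placing a vertex on the horizontal line $t_2 = m_{\text{pr}}(f)$ in the new Newton diagram. A short computation verifies that the new Newton distance equals $m_{\text{pr}}(f) > d(f)$, so the original coordinates were not adapted. The adapted coordinate formula for $f_{\text{pr}}$ and the identity $h(f_{\text{pr}}) = m_{\text{pr}}(f)$ then follow from Proposition \ref{h-homo}.

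For the converse, I argue by contrapositive: if one of the three conditions fails, the coordinates are adapted. If $\pi(f)$ is a vertex $(d,d)$, then the monomial $c x_1^d x_2^d$ is present in $f$, and after any substitution $y_2 = x_2 - \phi(x_1)$ the term $c y_1^d y_2^d$ still appears, so $(d,d) \in \Gamma(f_{\text{new}})$ and the Newton distance cannot exceed $d$; a parallel observation handles the unbounded-edge case. If $\pi(f)$ is a compact edge but $\kappa_2/\kappa_1 \notin \mathbb{N}$, Lemma \ref{QuasiHomogeneousHeight} part $(2)$ yields $m_{\text{pr}}(f) \le M < d(f)$, reducing to the last subcase: $\pi(f)$ a compact edge with $\kappa_2/\kappa_1 \in \mathbb{N}$ and $m_{\text{pr}}(f) \le d(f)$.

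The main obstacle is this last subcase: one must show that no analytic change of variables can strictly exceed $d(f)$. After possibly swapping $x_1, x_2$, I would reduce to transformations of the form $y_2 = x_2 - \phi(x_1)$ with $\phi$ analytic and $\phi(0) = 0$, since modifications of $x_1$ alone cannot increase $d_z(f)$. Expand $\phi(x_1) = a x_1^k + O(x_1^{k+1})$; for the substitution to create any cancellation on the principal edge one needs $(a, k)$ to match a Puiseux root $\xi_j$ of $f_{\text{pr}}$ (forcing $k = m$ and $a = \xi_j \in \mathbb{R}$). By Lemma \ref{QuasiHomogeneousHeight} this root has multiplicity $n_j \le m_{\text{pr}}(f) \le d(f)$, and tracking the Newton polyhedron under the substitution term by term shows that the new vertex arising from zeroing out this root sits at height $n_j$, so the new Newton distance is bounded by $\max(d(f), n_j) = d(f)$. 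Contributions from higher-order terms in $\phi$ only perturb this picture by moving the new vertex to the right, which cannot increase the bisectrix intersection. This Newton-polyhedron bookkeeping under Puiseux-type substitutions, essentially the two-dimensional Varchenko-type analysis, is the technical heart of the proof.
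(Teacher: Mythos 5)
The paper does not prove Theorem \ref{AdaptedCharacterization} at all: it is imported verbatim from Ikromov and M\"uller's paper \cite{IM-adapted}, where it corresponds to their Corollary 3.2 / Theorem 4.2 and their entire development of Varchenko's two-dimensional algorithm via Puiseux series. So there is no ``paper's own proof'' to compare against; what you have produced is a sketch of the argument one would find in \cite{IM-adapted}, and it must be judged on its own terms.

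Your forward direction (conditions (1)--(3) imply not adapted) is essentially sound, modulo one imprecision. You claim the new Newton distance of $\tilde f$ equals $m_{\text{pr}}(f)$. That need not hold for the full $\tilde f$ (higher-order terms of $f$ can push in new low-lying support points), though it does hold for $\tilde f_{\text{pr}}$, which is what the ``moreover'' clause asserts. What one can and should prove is the weaker statement $d_z(\tilde f) > d(f)$: since the shear $y_2 = x_2 - ax_1^m$ preserves $(1,m)$-weighted degree, $\Gamma(\tilde f)$ sits in the half-plane $t_1 + m t_2 \geq d(f)(1+m)$, and because the transformed principal part carries a factor $y_2^{m_{\text{pr}}}$ with $m_{\text{pr}} > d(f)$, the point $(d(f),d(f))$ cannot lie in $\Gamma(\tilde f)$. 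That is the degree bookkeeping your ``short computation'' should spell out; the claim as written is not quite what's needed.

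The converse has two genuine gaps. First, your treatment of the vertex case is wrong. You assert that if $\pi(f) = (d,d)$, then after the substitution $y_2 = x_2 - \phi(x_1)$ ``the term $c y_1^d y_2^d$ still appears.'' This is false: other monomials of $f$ contribute to the coefficient of $y_1^d y_2^d$ and cancellation can occur. For example, with $f(x_1,x_2) = x_1^2 x_2^2 + x_1 x_2^3$ and $x_2 = y_2 - \tfrac{1}{3} x_1$, the coefficient of $y_1^2 y_2^2$ is $1 - 3 \cdot \tfrac{1}{3} = 0$. (In that example $(2,2)$ still lies on $\Gamma(\tilde f)$, but only because the vertices $(1,3)$ and $(3,1)$ survive --- which is exactly the nontrivial fact that needs proving.) The vertex case in \cite{IM-adapted} is not a one-line observation. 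Second, and more seriously, the compact-edge subcase with $\kappa_2/\kappa_1 \in \mathbb{N}$ and $m_{\text{pr}}(f) \le d(f)$ --- which you correctly identify as the technical heart --- is only gestured at. Showing that \emph{no} analytic change of variables raises $d_z(f)$ above $d(f)$ requires the full Puiseux-series machinery (Lemma \ref{NewtonPolygonLemma} and its consequences), and the reduction to shears $y_2 = x_2 - \phi(x_1)$ also needs justification. As it stands, the proposal identifies the right skeleton but does not supply a proof of either direction of the equivalence beyond the forward implication.
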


We will apply Theorem \ref{AdaptedCharacterization} in the case when $f$ has rational coefficients.
In this case, when the principal face is a compact edge, 
$m_{\text{pr}}(f) = \max_{1\le j \le N} n_j$ where the $\{n_j\}$ are the multiplicities
of the roots of the principal part $f_{\text{pr}}(x,y) = c x^{\alpha} y^{\beta} \prod_{j=1}^N (y^q - \xi_j x^m)^{n_j}$
of $f$. We have $f_{\text{pr}} \in {\mathbb Q}[X,Y]$ is a quasi-homogeneous polynomial with rational
coefficients and we apply Lemma \ref{QuasiHomogeneousHeight} with $K = {\mathbb Q}$ to conclude
that if $n_{j_{*}} = m_{\text{pr}}(f) > d(f)$, then the principal root $\xi_{j_{*}} \in {\mathbb Q}$ of $f$ is a rational number.


We will adopt the following terminology from \cite{IM-adapted}. If a pair of dilation parameters $\kappa = (\kappa_1, \kappa_2)$ 
is chosen so that $L_{\kappa} = \{ (t_1, t_2) : \kappa_1 t_1 + \kappa_2 t_2 = 1 \}$ is a supporting line
of the Newton polygon (that is, it contains a face $\tau = \tau_{\kappa}$ of the Newton diagram ${\mathcal N}_d(f)$), then
we call $f_{\tau}(x_1,x_2) = \sum_{(j,k) \in \tau} c_{j,k} x_1^j x_2^k$ the $\kappa$-principal part of $f$.
Abusing notation, we will sometimes denote this by $f_{\kappa}$. Note that
$f_{\kappa}(x_1, x_2)$ is a quasi-homogeneous polynomial such that 
$f_{\kappa}(r^{\kappa_1} x_1, r^{\kappa_2} x_2) = r f_{\kappa}(x_1, x_2)$.

\subsection{Prerequisites to the Algorithm}
The Weierstrass preparation theorem holds for the ring $\mathbb{Q}\{x_1,x_2\}$ of convergent power series
with rational coefficients. This can be seen by either modifying the proof of the Weierstrass preparation theorem for real coefficients given in \cite{Narasimhan} or observing that the Weierstrass preparation theorem holds for both $\mathbb{R}\{x_1, x_2\}$ (see \cite{Narasimhan}) and for the rings $\mathbb{Q}[[x_1, x_2]]$ of formal power 
series over $\mathbb{Q}$ and $\mathbb{R}[[x_1, x_2]]$ of formal power series over $\mathbb{R}$ (see \cite{Abhyankar}), and observing that the uniqueness of the factorisation in $\mathbb{R}[[x_1, x_2]]$ given by the Weierstrass preparation theorem implies that the factorisation over $\mathbb{Q}[[x_1, x_2]]$ and $\mathbb{R}\{x_1, x_2\}$ are the same. 

This means that given an analytic function $f \in \mathbb{Q}\{x,y\}$, convergent in a neighbourhood of the origin  (with the real topology on 
$\mathbb{Q}$), where $f(0, x_2) = x_1^{\nu_1} x_2^{\nu_2} f'(x_1, x_2)$, and where $f'(0, x_2) = x_2^m g(x_2)$, $g(0) \neq 0$, we can write $f$ in the form 
\[f(x_1, x_2) = U(x_1, x_2) x_1^{\nu_1} x_2^{\nu_2}F(x_1, x_2)\]
where
\[F(x_1, x_2) = x_2^m + g_1(x_1) x_2^{m-1} + \cdots + g_m(x_1)\]
where $U(0,0) \not= 0$ and $g_j(0)=0$ for all $j$. Furthermore
$g_1, \ldots, g_m \in \mathbb{Q}\{x_1\}$ are uniquely determined, not just in $\mathbb{Q}\{x_1\}$, but also as formal power series in the larger rings $\mathbb{Q}[[x_1]]$ and $\mathbb{R}[[x_1]]$. $U$ is also uniquely defined as a power series in $\mathbb{R}[[x_1, x_2]]$.



We may assume that $g_m$ is not zero. Then the roots $r(x_1)$ of $F(x_1, x_2)$ have a Puiseux series expansion
\[r(x_1) \ = \ c_1 x_1^{a} + c_2 x_1^{b} \ + \ \cdots \]
where, importantly for us, the nonzero {\it coefficients} $c_l$'s lie in $\mathbb{Q}^{\text{alg}}$, the algebraic closure of $\mathbb{Q}$
and the {\it exponents} $0< a < b < \cdots$ are a strictly increasing sequence of rational numbers. A reference showing the existence of a \emph{formal} Puiseux expansion of this form is Abhyankar's book \cite{Abhyankar}. Combining this with the usual Puiseux theorem for real power series as we did for the Weierstrass preparation theorem gives that the series describing each root is convergent.

The Puiseux expansion of two or more distinct roots $r$ of $F$ may agree for the first few terms and it will be important for us to
quantify this. 

We introduce the following notation from \cite{IM-adapted}. Let 
$a_1 < \cdots < a_n$ be the distinct leading exponents of the roots of $F$ so that each root
$r(x_1) = c x_1^{a_{l}} + O(x_1^{A})$ for some $c \not=0$,  $1\le l \le n$ and for some $A > a_{l}$.
For each $l \in \{1,2,\ldots, n\}$, we denote by $\begin{bsmallmatrix} \cdot \\ l \end{bsmallmatrix}$
the collection of roots with leading exponent $a_l$. Next, for every $1\le l \le n$, let $\{c_{l}^{(\alpha)}\}$ denote the collection of distinct, leading nonzero coefficients
appearing in the expansion of a root with leading exponent $a_l$ and let
$\begin{bsmallmatrix}\alpha \\ l\end{bsmallmatrix}$ denote the collection of roots with leading exponent $a_l$ and leading
coefficient $c_l^{(\alpha)}$. 

We continue to the second exponent in the expansion; for every $l_1$ and $\alpha_1$, we let
$\{a_{l_1, l}^{(\alpha_1)} :  l\ge 1 \}$ denote the collection of distinct exponents appearing in the second term of the Puiseux expansion
of the roots in 
$\begin{bsmallmatrix} \alpha_1 \\ l_1\ \end{bsmallmatrix}$. Proceeding in this way, we can express each
root $r$ as
\[r(x_1) = c_{l_1}^{(\alpha_1)} x_1^{a_{l_1}} + c_{l_1, l_2}^{(\alpha_1, \alpha_2)} x_1^{a_{l_1, l_2}^{(\alpha_1)}} + \cdots + c_{l_1, \ldots, l_p}^{(\alpha_1, \ldots, \alpha_p)} x_1^{a_{l_1, \ldots, l_p}^{(\alpha_1, \ldots, \alpha_{p-1})}} + \cdots\]
where the nonzero coefficients $c_l$ lie in $\mathbb{Q}^{\text{alg}}$ and
\[c_{l_1, \ldots, l_p}^{(\alpha_1, \ldots, \alpha_{p-1}, \beta)} \neq c_{l_1, \ldots, l_p}^{(\alpha_1, \ldots, \alpha_{p-1}, \gamma)}\]
whenever $\beta \neq \gamma$. Also
\[a_{l_1, \ldots, l_p}^{(\alpha_1, \ldots, \alpha_{p-1})} > \ a_{l_1, \ldots, l_{p-1}}^{(\alpha_1, \ldots, \alpha_{p-2})}\]
so that the terms in $r$ have increasing exponents. Furthermore the exponents are positive rational numbers.

The root cluster $\begin{bsmallmatrix} \alpha_1 & \cdots & \alpha_p \\ l_1 & \cdots & l_p \end{bsmallmatrix}$
denotes the collection of roots whose first $p$ leading terms are indexed by $l_1, \alpha_1, l_2, \alpha_2, \ldots, l_p, \alpha_p$.  We will also introduce clusters where the last \emph{exponent} has been picked but not the last \emph{coefficient}. These are denoted 
$\begin{bsmallmatrix} \alpha_1 & \cdots & \alpha_{p-1} & \cdot \\ l_1  & \cdots & \ l_{p-1} & \ l_p \end{bsmallmatrix}$
and equal the union over $\alpha_p$ of the clusters 
$\begin{bsmallmatrix} \alpha_1 & \cdots & \alpha_{p-1} & \alpha_p \\ l_1 & \cdots & \ l_{p-1} & l_p \end{bsmallmatrix}$.
The notation $N[\text{cluster}]$ will denote the number of roots in a cluster.

Since each $a_l$ corresponds to the cluster 
$\begin{bsmallmatrix} \cdot \\ l \end{bsmallmatrix}$,
the collection of roots of $F$ can be expressed as the union over all $l$ of these clusters. Then we can write
\[f(x_1, x_2) = U(x_1, x_2) x_1^{\nu_1} x_2^{\nu_2} \prod_{l=1}^n \Phi \begin{bsmallmatrix} \cdot \\ l \end{bsmallmatrix}
(x_1, x_2)\]
where 
\[\Phi \begin{bsmallmatrix}\cdot \\ l\end{bsmallmatrix}
(x_1, x_2) := \prod_{r \in \begin{bsmallmatrix}\cdot \\ l \end{bsmallmatrix}}
(x_2 - r(x_1)).\]

The advantage of this decomposition is that it allows us to read off the vertices of the Newton polygon.

\begin{lemma}\label{NewtonPolygonLemma}
The points $(A_l, B_l)$ 
where
\[A_l \ = \ \nu_1 + \sum_{\mu \leq l} a_{\mu} N \begin{bsmallmatrix} \cdot \\ \mu \end{bsmallmatrix} \ \ {\rm and} \ \ 
B_l \  = \ \nu_2 + \sum_{\mu \geq l+1}  N \begin{bsmallmatrix} \cdot \\\mu \end{bsmallmatrix} \]
 are the vertices of the Newton polygon of $f$. 
\end{lemma}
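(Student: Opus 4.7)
The plan is to reduce the lemma to computing the Newton polygon of $F(x_1,x_2)=\prod_{r}(x_2-r(x_1))$ and then account for the remaining factors. Since $U(0,0)\neq 0$ and the support of $U$ otherwise lies in the positive orthant, multiplication by $U$ does not extend the Newton polyhedron to the lower-left; provided the candidate vertex coefficients of $F$ are nonzero, we have $\Gamma(UF)=\Gamma(F)$, with each vertex monomial merely picking up the nonzero scalar $U(0,0)$. Multiplication by $x_1^{\nu_1}x_2^{\nu_2}$ then translates the polyhedron by $(\nu_1,\nu_2)$. Abbreviating $N_\mu:=N\begin{bsmallmatrix}\cdot\\\mu\end{bsmallmatrix}$ and $M:=\sum_{\mu=1}^{n}N_\mu$, it therefore suffices to show that the vertices of the Newton polygon of $F$ are the points $\bigl(\sum_{\mu\le l}a_\mu N_\mu,\ \sum_{\mu>l}N_\mu\bigr)$ for $l=0,1,\ldots,n$.

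Expanding
\[
F(x_1,x_2) \ = \ \sum_{k=0}^{M}(-1)^{k}\,e_k\bigl(r(x_1)\bigr)\,x_2^{M-k},
\]
the coefficient of $x_2^{M-k}$ is, up to sign, the $k$-th elementary symmetric polynomial in the $M$ Puiseux roots of $F$. Each root in cluster $[\mu]$ has leading $x_1$-exponent $a_\mu$ with a nonzero leading coefficient, and since $a_1<a_2<\cdots<a_n$, the minimum $x_1$-exponent attainable by a $k$-fold product of distinct roots is realised greedily by filling clusters in order of increasing $a_\mu$. As a function of $k$ this minimum is piecewise linear with breakpoints precisely at $k=\sum_{\mu\le l}N_\mu$, at which it takes the value $\sum_{\mu\le l}a_\mu N_\mu$. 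At each such breakpoint the minimising subset is unique, namely the set of all roots in clusters $1,\ldots,l$, so the corresponding coefficient in $e_k$ is the single nonzero product of those leading coefficients; consequently each candidate point lies in the support of $F$.

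To verify that the candidates are genuine vertices of the Newton polygon, I compute the slope of the segment joining consecutive candidates: going from the $(l-1)$st to the $l$th candidate yields slope $-N_l/(a_l N_l)=-1/a_l$, and the strict inequalities $a_1<a_2<\cdots<a_n$ produce strictly increasing slopes $-1/a_1<-1/a_2<\cdots<-1/a_n$. Between consecutive breakpoints the minimum-exponent support points themselves lie on a line of slope $-1/a_l$, so the lower-left envelope of $\Gamma(F)$ is exactly this concave piecewise linear curve and the candidates are genuine vertices with no extra vertices in between. Translating by $(\nu_1,\nu_2)$ yields the asserted formulas for $A_l$ and $B_l$.

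The main subtlety is the non-cancellation at the vertex points; this works cleanly precisely because the greedy-minimising subset is uniquely determined at each breakpoint. Elementary-symmetric cancellations among leading coefficients of roots within a common cluster could in principle occur at interior lattice points on the edges, but such cancellations would only suppress certain interior support points and do not affect the vertex structure. I do not anticipate further obstacles.
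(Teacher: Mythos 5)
Your proof is correct, and its combinatorial core coincides with the paper's: both arguments hinge on the observation that the greedy choice of lowest-order terms is unique, so the candidate vertex coefficient is a single nonzero product and no cancellation can occur there. The organization differs, though. The paper sweeps over supporting lines of slope $-1/\kappa$ for $\kappa$ strictly between consecutive leading exponents $a_l$, noting that for each such $\kappa$ the minimal $(1,\kappa)$-weighted monomial in $\prod_r (x_2 - r(x_1))$ is obtained by taking $x_1^{a_l}$ from the factors with $a_l < \kappa$ and $x_2$ from the rest---a unique choice, hence a vertex. You instead fix the $x_2$-degree, expand via the elementary symmetric polynomials $e_k$ of the Puiseux roots, compute the minimal $x_1$-exponent of $e_k$ as a piecewise-linear function of $k$, and read off the breakpoints and slopes of the resulting lower envelope. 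These are dual descriptions of the same convex polygon: the paper's version avoids the $e_k$'s and reads a little more cleanly, while yours makes the lower-envelope picture explicit and gives a somewhat more direct verification that the $(A_l, B_l)$ are precisely the vertices, with none missed in between. Two small remarks: in the greedy step, ``$k$-fold product of distinct roots'' should read ``product over a $k$-element subset of the $M$ roots counted with multiplicity,'' since repeated roots occur; and your brief justification that multiplication by the unit $U$ preserves the vertices is spelled out more carefully than in the paper, which simply replaces $f$ by $x_1^{\nu_1}x_2^{\nu_2}F$ without comment---that extra care is welcome.
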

Here $l$ ranges between $0$ and $n$.
When $l=0$, we set $a_0 = 0$ so that 
$A_0 = \nu_1$ and $B_0 = \nu_2 + m$ where $m$ is the degree of $F(x_1, x_2)$ as a polynomial in $x_2$;
that is, the sum of the multiplicities of the roots of $F$. When $l=n$, $B_n = \nu_2$.
\begin{proof}
The Newton polygon of $f$ is the same as the Newton polygon of $x_1^{\nu_1} x_2^{\nu_2} F(x_1, x_2)$.

Consider any $\kappa >0$ not among the exponents $\{a_1, a_2, \ldots, a_n\}$ and choose $0 \le l_{\kappa} \le n$
so that $a_{l_{\kappa}} < \kappa < a_{l_{\kappa}+1}$ (if $a_n < \kappa$, choose $l_n$). Let 
$L_{\kappa} = \{ (t_1, t_2):  t_1 + \kappa t_2 = c_{\kappa} \}$ be a supporting line of the Newton polygon of $f$.
It either intersects the Newton diagram in a vertex or a compact edge as $\kappa$ is a positive, finite number.
In fact we will see that $L_{\kappa}$ intersects the Newton diagram in a vertex. 

We say that a monomial $x_1^c x_2^d$ in the Puiseux expansion of $F$ has degree $c + \kappa d$ with respect to the weight
$(1,\kappa)$. A necessary and sufficient condition for a point $(c_0,d_0)$ to lie on $L_{\kappa}$ is that it has minimal $(1,\kappa)$-degree among all the pairs $(c,d)$ arising as a monomial $x_1^c x_2^d$ in the Puiseux expansion of $F$.

For each factor $x_2 - r(x_1)$ arising in $F$ with the root $r(x_1)$ belonging to $\begin{bsmallmatrix}\cdot \\ l \end{bsmallmatrix}$,
the term $x_2$ has $(1,\kappa)$-degree equal to $\kappa$ and the minimal $(1,\kappa)$-degree among the terms in
the Puiseux expansion of $r(x_1)$ is $a_l$. Hence the lowest-degree $(1, \kappa)$-monomial appearing in $F$ is 
$x_1^{A_{l_{\kappa}}} x_2^{B_{l_{\kappa}}}$ since we take the $x_1^{a_{l}}$ term for 
$l \leq l_{\kappa}$ and the $x_2$ term for $l > l_{\kappa}$. This shows that $L_{\kappa}$ intersects the Newton
diagram at the vertex
$(A_{l_{\kappa}}, B_{l_{\kappa}})$.

Note that each $A_l$ must be an integer (it is obvious that $B_l$  is an integer) since the vertices of the Newton diagram
are lattice points. 
\end{proof}
Now, notice that $A_l - A_{l-1} = - a_l (B_l - B_{l-1})$, since $N \begin{bsmallmatrix} \cdot \\ l \end{bsmallmatrix}$ is equal to $B_l - B_{l-1}$.  From this it immediately follows that the slope of the line connecting $(A_{l-1}, B_{l-1})$ to $(A_l, B_l)$ is $-1 / a_l$. Therefore the
line connecting $(A_{l-1}, B_{l-1})$ to $(A_l, B_l)$ is given by $y = -(1/a_l) (x - A_l) + B_l$.
This line intersects the bisectrix at $(d_l, d_l)$ where $d_l = -(1/a_l) (d_l - A_l) + B_l$, so 
$d_l = \frac{A_l + a_l B_l}{1 + a_l}$.  If we index
this line $L_{\kappa^l} = \{(t_1, t_2): \kappa^l_1 t_1 + \kappa_2^l t_2 = 1 \}$ by the dilation parameters
$\kappa^l = (\kappa_1^l, \kappa_2^l)$, then 
\[ \kappa_1^l \ = \ \frac{1}{A_l + a_l B_l}, \ \ {\rm and} \ \  \kappa_2^l \ = \ \frac{a_l}{A_l + a_l B_l} \ \ {\rm so \ that} \ 
a_l \ = \ \frac{\kappa_2^l}{\kappa_1^l}. \]

The vertical edge, which passes through $(\nu_1, \nu_2 + m)$ (here $m$ is the sum of the multiplicities of all the roots $r(x_1)$
in $F$),
intersects the bisectrix at $(\nu_1, \nu_1)$, and the horizontal edge, passing through $(A_n, \nu_2)$, is contained in a line intersecting the bisectrix at $(\nu_2, \nu_2)$. So the distance $d(f)$ is given by $\max(\nu_1, \nu_2, \max_l d_l)$.

Finally, we observe that the $\kappa^l$-principal part of $f$ is the same as the $\kappa^l$-principal part of
\[c \, x_1^{\nu_1} x_2^{\nu_2} \prod_{j, \alpha} (x_2 - c_{j}^{(\alpha)} x_1^{a_{j}})^{N
\begin{bsmallmatrix} \alpha \\ j \end{bsmallmatrix}}\]
where $c = U(0,0)$. Since the $\kappa^l$-principal part of 
$x_2 - c_j^{(\alpha)} x_1^{a_j}$ equals $c_j^{(\alpha)} x_1^{a_j}$ if $j < l$
and equals $x_2$ if $l < j$, we have
\begin{equation}\label{fk}
f_{\kappa^{l}}(x_1, x_2) = c_l x_1^{A_{l-1}} x_2^{B_l} \prod_{\alpha} (x_2 - c_l^{(\alpha)} x_1^{a_l})^{N \begin{bsmallmatrix} \alpha \\ l \end{bsmallmatrix}}.
\end{equation}
In view of \eqref{fk} we say that the edge $[(A_{l-1}, B_{l-1}), (A_l, B_l)]$ is associated to the cluster
of roots $\begin{bsmallmatrix} \cdot \\ l \end{bsmallmatrix}$.
\subsection{The Algorithm}
We are now ready to describe the algorithm. Suppose that $f(x_1, x_2)$ is a real-analytic function
near $(0,0)$ with rational coefficients. Furthermore suppose that the coordinates $(x_1, x_2)$ are not adapted (otherwise
there is nothing to do).

We apply Theorem \ref{AdaptedCharacterization} part (a)
to conclude that the principal face $\pi(f)$ is a compact edge which lies
on a uniquely determined line $L_{\kappa} = \{ (t_1, t_2) : \kappa_1 t_1 + \kappa_2 t_2 = 1 \}$ with $\kappa_1, \kappa_2 > 0$.
The principal part $f_{\text pr}$ is just the $\kappa$-principal part of $f$.
By Lemma \ref{NewtonPolygonLemma} the compact edges of ${\mathcal N}_d(f)$ are given
by $[(A_{l-1}, B_{l-1}), \, (A_l, B_l)]$ with $1\le l \le n$. 
Choose $\lambda$ so that the principal face $\pi(f)$ of $f$
is $\tau_{\lambda} := [(A_{\lambda-1}, B_{\lambda-1}), \, (A_{\lambda}, B_{\lambda})]$. 
Therefore by \eqref{fk}, we have
\begin{equation}\label{fkpr}
f_{\text pr}(x_1, x_2) \ = \ f_{\kappa^{\lambda}}(x_1, x_2) \ = \ 
c \, x_1^{A_{\lambda-1}} x_2^{B_{\lambda}} \prod_{\alpha} (x_2 - c_{\lambda}^{(\alpha)} x_1^{a_{\lambda}})^{N \begin{bsmallmatrix} \alpha \\ l \end{bsmallmatrix}}.
\end{equation}
The slope
of $\tau_{\lambda}$ is $-1/a_{\lambda}$ so that $a_{\lambda} = \kappa_2/\kappa_1$. By
Theorem \ref{AdaptedCharacterization} part (b), $a_{\lambda} \in {\mathbb N}$. Furthermore
by part (c), there exists an index $\beta$ such that 
\begin{equation}\label{multi-d-inequality}
m_{\text pr}(f) \ = \  N \begin{bsmallmatrix} \beta \\ \lambda \end{bsmallmatrix} \ > \ d(f) \ = \ 
\frac{A_{\lambda} + a_{\lambda} B_{\lambda}}{1 + a_{\lambda}}, \ \ {\rm and} \ \  c_{\lambda}^{(\beta)} \in \mathbb{Q}.
\end{equation}
The root
$c_{\lambda}^{(\beta)}$ is the principal root of $f_{\text pr}$.

The first step is to apply $x = \sigma(y)$ where $y_1 := x_1$ and $y_2 := x_2 - c_{\lambda}^{(\beta)} x_1^{a_{\lambda}}$ and put $\tilde f = f \circ \sigma$. Then $\tilde f(y_1, y_2)$ is equal to $f(y_1, y_2 + c_{\lambda}^{(\beta)} y_1^{a_{\lambda}})$. Since $a_{\lambda}$ is an integer, this is a polynomial change of variables. 

We want to see what happens to the Newton diagram from this change of variables. 
We will use the \ $\widetilde{ }$ \ notation to denote quantities in the variables $(y_1, y_2)$; for example 
${\tilde U}(y_1, y_2) = U(y_1, y_2 + c_{\lambda}^{(\beta)} y_1^{a_{\lambda}})$. Hence
\[{\tilde f}(y_1, y_2) \ = \ {\tilde U}(y_1, y_2) y_1^{\nu_1} (y_2 +  c_{\lambda}^{(\beta)} y_1^{a_{\lambda}})^{\nu_2}
\prod_{l,\alpha} \bigl(y_2 - ( r(y_1) -  c_{\lambda}^{(\beta)} y_1^{a_{\lambda}})\bigr)^{N
\begin{bsmallmatrix} \alpha \\ l \end{bsmallmatrix}}\]
and so each root $\tilde r(y_1)$ of $\tilde f$ has the form
\[\tilde r(y_1) \ = \ c_l^{(\alpha_1)} y_1^{a_l} - c_{\lambda}^{(\beta)} y_1^{a_{\lambda}} \ + \ \text{higher order terms}.\]

For $l < \lambda$, the lowest degree term in the root is left unchanged, so we have $\tilde a_l = a_l$.  Furthermore the multiplicities $N \widetilde{\begin{bsmallmatrix} \cdot \\ l \end{bsmallmatrix}}$ are the same as the corresponding multiplicities for $f$. 

For $l > \lambda$, any root $r$ in any cluster $\begin{bsmallmatrix} \cdot \\ l \end{bsmallmatrix}$ (including the $x_2^{\nu_2}$ term) is transformed into a root with leading exponent $a_{\lambda}$. The same happens for roots in $\begin{bsmallmatrix} \cdot \\ \lambda \end{bsmallmatrix}$ that are not in $\begin{bsmallmatrix} \beta \\ \lambda \end{bsmallmatrix}$. Finally if
$r \in \begin{bsmallmatrix} \beta \\ \lambda \end{bsmallmatrix}$, then the leading exponent of ${\tilde r}$ is
of the form $a_{\lambda, l_2}^{(\beta)} > a_{\lambda}$. 

 Following Ikromov and M\"uller \cite{IM-adapted}, we separately consider two cases depending on whether or not there is a root that maps to a root with leading exponent $a_{\lambda}$. 

Case 1: This is the case where there is at least one root that maps to a root with leading exponent $a_{\lambda}$. 
This implies that $\tilde a_{\lambda} = a_{\lambda}$. We have $\tilde B_{\lambda} = N \begin{bsmallmatrix} \beta \\ \lambda \end{bsmallmatrix}$ since the roots ${\tilde r}$ with leading exponent greater than $a_{\lambda}$ are precisely
those roots corresponding to $r \in \begin{bsmallmatrix} \beta \\ \lambda \end{bsmallmatrix}$.

We then see that $\tilde A_{\lambda} = \tilde A_{\lambda -1} + a_{\lambda} B_{\lambda} - a_{\lambda} N \begin{bsmallmatrix} \beta \\ \lambda \end{bsmallmatrix}$ so that
\[(\tilde A_{\lambda}, \tilde B_{\lambda}) = \left( A_{\lambda-1} + a_{\lambda} B_{\lambda} - a_{\lambda} N \begin{bsmallmatrix} \beta \\ \lambda \end{bsmallmatrix}, N \begin{bsmallmatrix} \beta \\ \lambda \end{bsmallmatrix} \right).\]
The inequality in \eqref{multi-d-inequality} is equivalent to the statement that $\tilde A_{\lambda} < \tilde B_{\lambda}$. 
Therefore the edge $[(\tilde A_{\lambda - 1}, \tilde B_{\lambda - 1}), (\tilde A_{\lambda} \tilde B_{\lambda})]$ lies entirely above the bisectrix and is thus not the principal face. Hence the principal face is associated to some subcluster $\left[\begin{smallmatrix}\beta & \cdot \\ \lambda & \lambda_2 \end{smallmatrix} \right]$ in the original coordinates (or is a horizontal edge in which case the new coordinates are adapted).

Case 2: This is the other case. Now there is no root with leading exponent $a_{\lambda}$ in the new coordinates
and again the principal face corresponds to a subcluster of the same form (or is an unbounded edge in which case 
we are done).

If $\tilde f$ is not yet expressed in an adapted coordinate system (so that the conditions (1)-(3) in Theorems \ref{AdaptedCharacterization} still hold), we continue the procedure. Now, the later steps are similar. If the conditions (1)-(3) are satisfied, we again take the principal root, which is known to exist and is a rational number. In terms of the original coordinates, we now have a change of coordinates $x = \sigma_{(2)}(y)$ of the form
\[y_1 := x_1; \ \ y_2 := x_2 - (c_{\lambda}^{(\beta)} x_1^{a_{\lambda}} + c_{\lambda, \lambda_2}^{(\beta, \beta_2)} x_1^{a_{\lambda, \lambda_2}^{(\beta)}})\]
where the coefficients are, once again, rational, and the exponents are integers, and now the new principal face will be 
a compact edge associated to a further subcluster of the original root cluster, or it will be an unbounded edge, in which case the new coordinates are adapted.

We iterate this procedure. If this procedure terminates after finitely many steps, then we have arrived at a polynomial shear transformation that converts the coordinates into adapted coordinates. The conclusion of
Theorem \ref{CoV} therefore follows.

On the other hand, it is possible that this procedure does not terminate after finitely many steps. In this case, the multiplicities 
\[N_k := N \begin{bsmallmatrix}
\beta & \beta_2 & \cdots & \cdot \\ \lambda & \lambda_2 & \cdots & \lambda_{k+1}
\end{bsmallmatrix}\]
are a nonincreasing sequence of positive integers and hence eventually constant. We can therefore find a polynomial 
$\psi_0 \in {\mathbb Q}[X]$ such that the function $f_0(x_1, x_2) := f(x_1, x_2 + \psi_0(x_2))$  has an analytic root
\[\rho(x_1) := c_{\lambda}^{(\beta)} x_1^{a_{\lambda}} + \cdots\]
where each coefficient of this root is rational and where $\rho$ is not a polynomial. Furthermore, $\psi_0$ can be chosen so that $\rho(x_1)$ is the only root with leading exponent $\lambda$, but the root $\rho$ may have high multiplicity.

Now if we take $\tilde f(y_1, y_2) := f_0(y_1, y_2 + c_{\lambda}^{(\beta)} y_1^{a_{\lambda}})$, the previous arguments imply that the principal face of $\tilde f$ 
must be the final non-horizontal edge in the Newton diagram. Furthermore $\tilde f$ does not have a vanishing root because this would imply that $f_0$ has a root $c_{\lambda}^{(\beta)} x_1^{a_{\lambda}}$, which cannot exist because that would contradict the multiplicity assumption on $f$ and the particular choice of $\psi_0$. 

We claim that $\tilde f$ satisfies the conclusion of Theorem \ref{CoV}. We will do this by making a further, non-polynomial change of variables that yields an adapted coordinate system.

By the construction of ${\tilde f}$, the vertices of the Newton polyhedron of 
${\tilde f}$ are given by $(A_0, B_0), \ldots, (A_{\lambda} B_{\lambda})$ 
where $B_{\lambda} = 0$ and the principal edge is  
$[(A_{\lambda - 1}, B_{\lambda - 1}), (A_{\lambda}, B_{\lambda})]$, where $A_{\lambda -1} < B_{\lambda -1}$. 
From \eqref{fkpr}, we see that the principal part of ${\tilde f}$ is
\[\tilde f_{\text pr}(x_1, x_2) = c x_1^{A_{\lambda - 1}} (x_2 - c_{\lambda}^{(\beta)} x_1^{a_{\lambda}})^N\]
where $N = B_{\lambda - 1} > \nu_1$.

We will now apply Proposition \ref{h-homo} to
show that the height of $\tilde f_{\text pr}$ is equal to $N$. 
Since the principal face of ${\tilde f}$ is the compact edge 
$[(A_{\lambda - 1}, B_{\lambda - 1}), (A_{\lambda}, B_{\lambda})]$ and $A_{\lambda -1} < B_{\lambda -1} = N$,
we see that $d(f_{\text pr}) < N$.
But the root $c_{\lambda}^{(\beta)}$ has multiplicity $N$ as a root in the sense of the factorization \eqref{phi-factored}, so this must be the principal root of 
$\tilde f_{\text pr}$ and thus the height of ${\tilde{f_{\text pr}}}$ is $N$ by Proposition \ref{h-homo}.

We now consider the function $f^*(y_1, y_2)$ given by $\tilde f(y_1, y_2 + \rho(y_1))$. The nonzero roots $\tilde r$ are given by $r - \rho$ with $r \in \begin{bsmallmatrix} \cdot \\ l \end{bsmallmatrix}$ for some $l < \lambda$ and they have the same multiplicities and leading exponents as $r$. This change of variables deletes the last vertex of the Newton polygon 
since the last factor changes into $y_2^N$ and the principal face is now an unbounded horizontal edge. Therefore the Newton distance is $N$, the multiplicity of the vanishing root and so the height of $f$, the height of $f^*$, the height of $\tilde f$,
and the height of ${\tilde{f_{\text pr}}}$ are all equal to $N$.

The completes the proof of Proposition \ref{CoV}.

\section{Hensel's lemma and the proof of Proposition \ref{vc}}\label{hensel}

A weaker version of Proposition \ref{vc} was established in \cite{W-jga} but the argument given
in \cite{W-jga} readily extends to give a proof of Proposition \ref{vc}. Here we give an outline of the proof which
relies on a generalisation of the classical Hensel lemma. The following result was established in \cite{W-jga}.

\begin{lemma}\label{hensel-generalised} Let $g \in {\mathbb Z}_p[X]$ with $p > {\rm deg}(g)$. Suppose
there exists an integer $L \ge 1$ such that for any $x_0 \in {\mathbb Z}_p$,

1. \  $| g^{(k+1)}(x_0) g(x_0)| \ < \ | g^{(k)}(x_0) g'(x_0)|, \ \ {\rm for \ all} \ \ 1 \le k \le L-1$, and

2. \ $| g(x_0)| \ < \ |g^{(L)}(x_0) g'(x_0)|$.

Then there exists a unique $x \in {\mathbb Z}_p$ such that $g(x) = 0$ and $|x - x_0| \le |g(x_0) g'(x_0)^{-1} |$.
\end{lemma}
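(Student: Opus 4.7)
The plan is to prove this via Newton's iteration, exploiting the fact that the hypotheses are assumed to hold at \emph{every} point of ${\mathbb Z}_p$ (not merely at a single $x_0$). I would set $\delta(x) := |g(x)/g'(x)|$, so that condition 2 together with $|g^{(L)}(x_0)| \le 1$ gives $\delta(x_0) < 1$, and define $x_{n+1} := x_n - g(x_n)/g'(x_n) \in {\mathbb Z}_p$. The heart of the argument is a one-step estimate: $|g'(x_1)| = |g'(x_0)|$ and $|g(x_1)| \le p^{-1}|g(x_0)|$. With these in hand, the hypotheses will apply at $x_1$, so iterating yields $\delta(x_n) \le p^{-n}\delta(x_0)$, and then $\{x_n\}$ is Cauchy with limit $x$ satisfying $g(x) = 0$ and $|x - x_0| \le \delta(x_0)$.

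To prove the one-step estimate I would Taylor expand
\[
g(x_1) \ = \ \sum_{k \ge 2} \frac{g^{(k)}(x_0)}{k!}\bigl(-g(x_0)/g'(x_0)\bigr)^k.
\]
Since $p > \deg g$, each $k!$ is a $p$-adic unit, so by the ultrametric it suffices to establish the master estimate
\begin{equation}\label{key-decay}
|g^{(k)}(x_0)|\,\delta(x_0)^k \ \le \ p^{-(k-1)}|g(x_0)| \quad \text{for all } k \ge 1.
\end{equation}
The case $k = 1$ is the tautology $|g'(x_0)|\,\delta(x_0) = |g(x_0)|$. For $2 \le k \le L$ I would proceed by induction: condition 1 rewrites as $|g^{(k+1)}(x_0)|\,\delta(x_0) \le p^{-1}|g^{(k)}(x_0)|$, and multiplication by $\delta(x_0)^k$ propagates the bound. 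For $k > L$ I would use $|g^{(k)}(x_0)| \le 1$ together with condition 2 in the form $\delta(x_0) \le p^{-1}|g^{(L)}(x_0)|$: write $\delta(x_0)^k = \delta(x_0)^L \cdot \delta(x_0)^{k-L}$, dominate $\delta(x_0)^{k-L}$ by $p^{-(k-L)}|g^{(L)}(x_0)|$ (absorbing powers via $|g^{(L)}(x_0)|^{k-L} \le |g^{(L)}(x_0)| \le 1$), and multiply by the $k = L$ case. The derivative invariance $|g'(x_1)| = |g'(x_0)|$ then follows from a parallel Taylor expansion whose correction terms all have absolute value at most $p^{-k}|g'(x_0)|$, strictly below $|g'(x_0)|$.

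Uniqueness should follow from the same master estimate: if $y \in {\mathbb Z}_p$ is another root with $|y - x| \le \delta(x_0)$, expanding $g(y) - g(x) = 0$ about $x_0$ and factoring $y - x$ out of each difference $(y - x_0)^k - (x - x_0)^k$, the higher-order contributions are bounded by $|g^{(k)}(x_0)|\,\delta(x_0)^{k-1}|y - x| \le p^{-(k-1)}|g'(x_0)||y - x|$ by \eqref{key-decay}. The ultrametric then forces $|g'(x_0)||y - x| \le p^{-1}|g'(x_0)||y - x|$, so $y = x$.

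The main obstacle I anticipate is the $k > L$ range of \eqref{key-decay}: condition 2 supplies only a single comparison between $\delta(x_0)$ and $|g^{(L)}(x_0)|$, yet it must be chained across an unbounded range of $k$, relying on the universal bound $|g^{(k)}(x_0)| \le 1$ for ${\mathbb Z}_p$-valued derivatives. It is precisely this two-regime propagation of the master estimate---replacing the quadratic Newton convergence underlying classical Hensel---that allows conditions 1 and 2 to substitute for the stronger classical assumption $|g(x_0)| < |g'(x_0)|^2$.
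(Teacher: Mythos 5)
Your argument is correct and follows exactly the approach the paper attributes to \cite{W-jga} in Remark 4: a Newton-iteration approximating sequence, with the master estimate $|g^{(k)}(x_0)|\,\delta(x_0)^k \le p^{-(k-1)}|g(x_0)|$ serving as the structural replacement for the classical quadratic bound $|g(x_0)| < |g'(x_0)|^2$. Your two-regime derivation of that estimate (inductive via condition 1 for $2\le k\le L$, then via $|g^{(k)}|\le 1$, $\delta(x_0)\le p^{-1}|g^{(L)}(x_0)|$ and the $k=L$ case for $k>L$) is sound, and so are the one-step conclusions $|g(x_1)|\le p^{-1}|g(x_0)|$, $|g'(x_1)|=|g'(x_0)|$ and the ultrametric uniqueness argument.

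One point deserves repair. You iterate by appealing to the hypothesis ``conditions 1 and 2 hold at every $x_0\in\mathbb{Z}_p$,'' which is what the lemma literally says, but the way the lemma is used in Section \ref{hensel} (conditions verified only at a single $u_0$ in each $R_j$), together with Remark 2 identifying $L=1$ with the classical, pointwise Hensel lemma, makes it clear the intended hypothesis is pointwise. Under that reading, ``the hypotheses will apply at $x_1$'' is not automatic. Fortunately the master estimate propagates without invoking the hypotheses again: expanding $g^{(k)}(x_1)$ about $x_0$ (with $k!$ a unit as $p>\deg g$) and using the master estimate at $x_0$ gives $|g^{(k)}(x_1)|\le \max_{j\ge 0}|g^{(k+j)}(x_0)|\delta(x_0)^j \le p^{-(k-1)}|g(x_0)|/\delta(x_0)^k$; combining this with $|g'(x_1)|=|g'(x_0)|$ and $|g(x_1)|\le|g(x_0)|$ yields $|g^{(k)}(x_1)|\delta(x_1)^k\le p^{-(k-1)}|g(x_1)|$, i.e.\ the master estimate at $x_1$. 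Inserting this short propagation step in place of re-invoking the hypotheses at each iterate makes the proof match the lemma as it is actually applied.
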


Remarks:

1. The lemma is valid for all primes $p$ but then the derivatives $g^{(k)}(x)$ appearing in the statement of the lemma
need to be replaced by $g^{(k)}(x)/k!$.

2. The $L=1$ case is the classical statement of Hensel's lemma. In this case, condition 1 is vacuous and 2 reduces to the usual hypothesis 
$|g(x_0)| < |g'(x_0)|^2$. In particular if $g(x_0) \equiv 0$ mod $p^s$ and $p^{\delta} || g'(x_0)$ where $\delta < s/2$,
then $|g(x_0)| < |g'(x_0)|^2$. The conclusion implies that there exists a unique $x \in {\mathbb Z}_p$ with
$x \equiv x_0$ mod $p^{s-\delta}$ and $g(x) = 0$.

3. The lemma holds in any field $K$, complete with respect to any nontrivial nonarchimedean absolute value $|\cdot|$
and $g \in {\mathfrak o}[X]$ where ${\mathfrak o} = \{ x \in K: |x| \le 1\}$.

4. The proof is a small variant of the usual proof of Hensel's lemma using the Newton formula to produce 
an approximating sequence to a solution of a polynomial equation.

We now turn to the proof of Proposition \ref{vc} where we  seek to prove the following: suppose $\psi \in {\mathbb Z}_p [X]$ and
that for some $n\ge 1$, $\psi^{(n)}(x_0)/n! \not\equiv 0$ mod $p$ for all $x_0 \in S$ in some set 
$S \subseteq {\mathbb Z}/p{\mathbb Z}$. Then for
$$ 
I \ := \ \sum_{x_0 \in S} \int_{B_{p^{-1}}(x_0)} {\rm e}(p^{-s} \psi(x)) \, dx, 
$$
we have $|I| \le C p^{-s/n}$ for all $s\ge 2$ with a constant $C$ depending only on $n$ and the degree of $\psi$.
This is the bound \eqref{vc-bound}. 

When $n=1$ then each integral in the above sum over $S$ vanishes. This follows in the same way we showed
${\mathcal I}_0 = 0$ in the proof of Theorem \ref{initial-bound}.

Suppose now $n\ge 2$, and, to simplify matters, we will assume that $s \equiv 0$ mod $n$. The other cases are slightly
more involved, especially the case $s \equiv 1$ mod $n$ but here we just want to give a general outline how to prove \eqref{vc-bound}.
When $s \equiv 0$ mod $n$, then $s = t n$ for some $t \ge 1$. We write
$$
I \ \ = \ \ \sum_{x_0 \in S} \sum_{\substack{u_0 \in \mathbb{Z}/p^{t} \mathbb{Z} \\ u_0 \equiv x_0 \text{ mod $p$}}}
\int_{B_{p^{-t}}(u_0)} {\rm e}(p^{-nt} \psi(x)) \, dx  \ \ =
$$
$$
\sum_{x_0 \in S} \sum_{\substack{u_0 \in \mathbb{Z}/p^{t} \mathbb{Z} \\ u_0 \equiv x_0 \text{ mod $p$}}} 
p^{-t} \int_{|u|\le 1} {\rm e}(p^{-nt} \psi(u_0 + p^t u)) \, du  =  
\sum_{x_0 \in S} \sum_{\substack{u_0 \in \mathbb{Z}/p^{t} \mathbb{Z} \\ u_0 \equiv x_0 \text{ mod $p$}}} 
 p^{-t} {\rm e}(p^{-nt} \psi(u_0)) T_{x_0, u_0}
$$
where
$$
T_{x_0, u_0} \ := \ \int_{|u|\le 1} {\rm e}
\bigl(p^{-(n-1)t} \sum_{r=1}^{n-1} \frac{1}{r!} \psi^{(r)}(u_0) p^{t(r-1)} u^r\bigr) du.
$$ 
We break up the sum over 
$$
R \ := \ \{(x_0, u_0) \in S \times {\mathbb Z}/p^t{\mathbb Z} : x_0 \equiv u_0 \, {\rm mod} \, p\} \ = \ R_1 \cup \cdots \cup R_n
$$
into $n$ disjoint sets where
$$
R_1 \ = \ \{ (x_0, u_0) \in R:  |\psi^{(n-1)}(u_0)| \le p^{-t} \},
$$
$$
R_2 \ = \ \{(x_0,u_0) \in R: |\psi^{(n-1)}(u_0)| > p^{-t} \ {\rm and} \ |\psi^{(n-2)}(u_0)| \le p^{-t} |\psi^{(n-1)}(u_0)| \}
$$
$$
\vdots
$$
$$
R_{n-1} \ = \ \Bigl\{(x_0, u_0) \in R : |\psi''(u_0)| > p^{-t} |\psi'''(u_0)| > \cdots > p^{-(n-2)t} 
$$
$$
{\rm and} \ \ |\psi'(u_0)| \le p^{-t} |\psi''(u_0)| \Bigr\},
$$
and
$$
R_n = \Bigl\{ (x_0, u_0) \in R: |\psi'(u_0)| > p^{-t} |\psi''(u_0)| > \cdots > p^{-(n-2)t}|\psi^{(n-1)}(u_0)| > p^{-(n-1)t} \Bigr\}.
$$

We make the following claim:
\begin{itemize}
\item  $\# R_j \le {\rm deg}(\psi), \ 1\le j\le n-1$; and
\item $T_{x_0, u_0} = 0 \ {\rm for \ every} \ (x_0, u_0) \in R_n$.
\end{itemize}

For $j=1$, we apply the classical Hensel lemma (the $L=1$ case in Lemma \ref{hensel-generalised}) to $g(x) = \psi^{(n-1)}(x)$
to deduce that for every $(x_0,u_0) \in R_1$, there exists a unique $x \in {\mathbb Z}_p$ such that $\psi^{(n-1)}(x) = 0$
and $x \equiv u_0$ mod $p^t$. Hence $\# R_1 \le {\rm deg}(g) \le {\rm deg}(\psi)$.

Next for $(x_0, u_0) \in R_2$, consider $g(x) = \psi^{(n-2)}(x)$ so that $|g(u_0)| \le p^{-t} |g'(u_0)|$ and $|g'(u_0)| > p^{-t}$.
Once again the classical version of Hensel implies that there exists a unique $x\in {\mathbb Z}_p$ such that $g(x) = 0$
and $x \equiv u_0$ mod $p^t$. Hence $\# R_2 \le {\rm deg}(g) \le {\rm deg}(\psi)$.

Now for $(x_0, u_0) \in R_j$ with $3\le j \le n-1$, consider $g(x) = \psi^{(n-j)}(x)$ so that $|g(u_0)| \le p^{-t}|g'(u_0)|$ and
$|g'(u_0)| > p^{-t} |g''(u_0)| > \cdots > p^{ - (j-1) t}$. Applying Lemma \ref{hensel-generalised} with $L=j-1$ shows that
there exists a unique $x\in {\mathbb Z}_p$ with $g(x) = 0$ and $x \equiv u_0$ mod $p^t$. Hence $\# R_j \le {\rm deg}(g) \le
{\rm deg}(\psi)$. 

Finally for $(x_0, u_0) \in R_n$, we define $\sigma = t(n-1) - t - \nu$ where $p^{-\nu} := |\psi''(u_0)|$. Note that
$(x_0,u_0) \in R_n$ implies that $|\psi''(u_0)| > p^{-(n-1)t + t}$ and so $t + \nu < (n-1) t$, implying $\sigma \ge 1$. Hence,
setting 
$$
\Psi(u) \ := \ \sum_{r=1}^{n-1} \frac{1}{r!} \psi^{(r)}(u_0) p^{t(r-1)} u^r,
$$
we have
$$
T_{x_0, u_0} \ = \ \int_{|u|\le 1} {\rm e}(p^{-(n-1)t} \Psi(u)) du \ = \ \sum_{w \in {\mathbb Z}/p^{\sigma}{\mathbb Z}}
\int_{B_{p^{-\sigma}}(w)} {\rm e}(p^{-(n-1)t} \Psi(u)) du
$$ 
$$
= \ \sum_{w \in {\mathbb Z}/p^{\sigma}{\mathbb Z}} p^{-\sigma} \int_{|y|\le 1} {\rm e}(p^{-(n-1)t} \Psi(w + p^{\sigma} y)) \, dy.
$$
Now observe that
$\Psi(w + p^{\sigma} y) = \Psi(w) + p^{\sigma} \psi'(u_0) y \ +$
$$  
\frac{1}{2} \psi''(u_0) p^t \bigl((w + p^{\sigma} y)^2 - w^2\bigr) \ + \cdots + \
\frac{1}{(n-1)!} \psi^{(n-1)}(u_0) p^{t(n-2)} \bigl( (w + p^{\sigma} y)^{n-1} - w^{n-1}\bigr).
$$
However since $(x_0, u_0) \in R_n$,
\[\left|\frac{1}{2} \psi''(u_0) p^t \bigl((w + p^{\sigma} y)^2 - w^2\bigr)\right|  =  p^{-t - \sigma - \nu} =\ p^{-t(n-1)} \]
and, by comparing $\psi^{(j)}(u_0)$ to $\psi''(u_0)$ and using the fact that $\sigma > 1$, we have for $3 \leq j \leq n-1$:
\begin{IEEEeqnarray*}{Cl}
& \left| \frac{1}{j!} \psi^{(j)}(u_0) p^{t(j-1)} \bigl( (w + p^{\sigma} y)^{j} - w^{j}\bigr) \right|\\
\leq & p^{(j-2)t} \left| \psi''(u_0) p^{t(j-1)} \bigl( (w + p^{\sigma} y)^{j} - w^{j}\bigr) \right|\\
\leq & \left|\psi''(u_0) p^t p^{\sigma} \right|\\
=    & p^{-t(n-1)}.
\end{IEEEeqnarray*}
This means that the $j \geq 2$ terms in the sum defining $\Psi$ are divisible by $p^{t(n-1)}$.
Hence
$$
T_{x_0, u_0} \ = \  \sum_{w \in {\mathbb Z}/p^{\sigma}{\mathbb Z}} p^{-\sigma} {\rm e}(p^{-(n-1)t} \Psi(w))
\int_{|y|\le 1} {\rm e}(p^{-(n-1)t + \sigma} \psi'(u_0) y) \, dy
$$
and this last integral is equal to zero since $(x_0, u_0) \in R_n$ implies
$$
|\psi'(u_0)| > p^{-t} |\psi''(u_0)| \ = \ p^{-t - \nu} \ = \ p^{\sigma - t(n-1)}
$$
and so $p^{t(n-1) - \sigma} \not| \, \psi'(u_0)$. 

This establishes the claim which implies
$$
|I| \ \le \ \Bigl| p^{-t} \sum_{j=1}^{n-1} \sum_{(x_0, u_0) \in R_j} {\rm e}(p^{-nt} \psi(u_0)) T_{x_0, u_0} \Bigr| \ \le \
(n-1) {\rm deg}(\psi) p^{-t} \ = \  C \, p^{-s/n},
$$
giving us \eqref{vc-bound}.

\bibliographystyle{plain}
\bibliography{LocalSum-paper}

\end{document}